\numberwithin{equation}{section}
\theoremstyle{plain}
\newtheorem{theorem}[equation]{Theorem}
\newtheorem{corollary}[equation]{Corollary}
\newtheorem{lemma}[equation]{Lemma}
\newtheorem{proposition}[equation]{Proposition}
\theoremstyle{definition}
\newtheorem{definition}[equation]{Definition}
\theoremstyle{remark}
\newtheorem{remark}[equation]{Remark}
\newcommand{\pt}{{\rm pt}}
\newcommand{\GW}{{\mathrm{GW}}}
\newcommand{\W}{\mathrm{W}}
\newcommand{\K}{\mathrm{K}}
\newcommand{\id}{\operatorname{id}}
\newcommand{\Cone}{\operatorname{Cone}}
\newcommand{\chark}{\operatorname{char}}
\newcommand{\eff}{{\mathrm{eff}}}
\newcommand{\Dba}{\mathbf{D}^-}
\newcommand{\Dun}{\mathbf{D}}
\newcommand{\DMeff}{\mathbf{DM}_{\eff}^{-}}
\newcommand{\DM}{\mathbf{DM}}
\newcommand{\DMGW}{\mathbf{DM}^\mathrm{GW}}
\newcommand{\DMW}{\mathbf{DM}^\mathrm{W}}
\newcommand{\DMGWeff}{\mathbf{DM}^{\GW}_{\eff}}
\newcommand{\DMeffmGW}{\mathbf{DM}^{\GW}_{\eff,-}}
\newcommand{\DMWeff}{\mathbf{DM}^\mathrm{W}_{\eff}}
\newcommand{\DMeffmW}{\mathbf{DM}^{\W}_{\eff,-}}
\newcommand{\Daff}{\mathbf{D}^-_{\affl}}
\newcommand{\ShN}{Sh_{nis}}
\newcommand{\ShNGW}{\ShN(GWCor)}
\newcommand{\ShNW}{\ShN(WCor)}
\newcommand{\PreGW}{Pre(GWCor)}
\newcommand{\DbaShNGW}{\Dba(\ShNGW)}
\newcommand{\DbaShNW}{\Dba(\ShNW)}
\newcommand{\PreTr}{Pre(Cor_{virt})}
\newcommand{\PreTrs}{Pre^{tr}}
\newcommand{\ShNTr}{Sh_{nis}(Cor_{virt})}
\newcommand{\ShNTrs}{Sh^{tr}_{nis}}
\newcommand{\ShNTrss}{Sh^{tr}}
\newcommand{\DbaAffCorvs}{\Dba_{\affl}(\PreTrs)}
\newcommand{\DbaPs}{\DbaPreTrs}
\newcommand{\DbaSs}{\DbaShNTrs}
\newcommand{\iaffDbaP}{\Dba_{\affl-inv}(\PreTrs)}
\newcommand{\caffDbaP}{\Dba_{\affl-contr}(\PreTrs)}
\newcommand{\iaffDbaSh}{\Dba_{\affl-inv}(\ShNTrss)}
\newcommand{\caffDbaSh}{\Dba_{\affl-contr}(\ShNTrss)}
\newcommand{\Tau}{\mathcal T}
\newcommand{\bZtr}{\bZ_{tr}}
\newcommand{\cmod}{\text{-}\mathbf{mod}}
\DeclareMathOperator{\Supp }{Supp}
\DeclareMathOperator{\Ker}{Ker}
\DeclareMathOperator{\Coker}{Coker}
\DeclareMathOperator{\coker}{coker}
\newcommand{\bGm}{{\mathbb G_m}}
\newcommand{\affl}{\mathbb{A}^1}
\newcommand{\aff}{{\mathbb{A}}}
\newcommand{\bGmw}{{\mathbb G_m^{\wedge 1}}}
\newcommand{\bZ}{\mathbb Z}
\newcommand{\Cor}{Cor_{virt}}
\newcommand{\ShNis}{Sh_{nis}(Sm_k)}
\newcommand{\DbaPre}{\Dba(Pre)}
\newcommand{\DbaPreTr}{\Dba(\PreTr)}
\newcommand{\DbaPreTrs}{\Dba(\PreTrs)}
\newcommand{\DbaShNTr}{\Dba(\ShNTr)}
\newcommand{\DbaShNTrs}{\Dba(\ShNTrs)}
\newcommand{\DbaNisTr}{\Dba_{nis}(\PreTr)}
\newcommand{\DbaNisTrs}{\Dba_{nis}(\PreTrs)}
\newcommand{\HLoc}{Loc}
\newcommand{\FtrPre}{F_{tr}}
\newcommand{\FtrDPre}{F_{tr}}
\newcommand{\LtrPre}{L_{tr}}
\newcommand{\LtrDPre}{\mathcal L_{tr}}
\newcommand{\LtrDShN}{\mathcal L_{tr}^{nis}}
\newcommand{\FtrN}{F_{tr}^{nis}}
\newcommand{\lnistr}{l_{nis}^{\PreTrs}}
\newcommand{\CorHLoc}{\Cor(\HLoc)}
\newcommand{\CorHLocs}{\Cor^{\HLoc}}
\newcommand{\loc}{loc}
\newcommand{\loctr}{loc_{tr}}
\newcommand{\locD}{loc}
\newcommand{\locDtr}{loc_{tr}}
\newcommand{\PreHLoc}{Pre(\HLoc)}
\newcommand{\PreHLocs}{Pre^{\HLoc}}
\newcommand{\PreHLocTr}{Pre(\CorHLoc)}
\newcommand{\PreHLocTrs}{Pre^{\HLoc}_{tr}}
\newcommand{\DbaPreHLoc}{\Dba(\PreHLoc)}
\newcommand{\DbaPreHLocTrs}{\Dba(\PreHLocTrs)}
\newcommand{\LtrLoc}{L_{tr}^{loc}}
\newcommand{\LtrLocD}{\mathcal L_{tr}^{loc}}
\newcommand{\lnis}{l_{nis}} 
\newcommand{\lNisTr}{l_{nis}^{tr}} 
\newcommand{\LnisTr}{L_{nis}^{tr}}
\newcommand{\RnisTr}{R_{nis}^{tr}}
\newcommand{\LnisDTr}{L_{nis}^{tr}}
\newcommand{\DPreTrs}{\mathbf D(\PreTrs)}
\newcommand{\DMeffmCorv}{ \mathbf{DM}^{Cor_{virt}}_{ \mathrm{eff,-} } }
\newcommand{\DMeffCorv}{ \mathbf{DM}^{Cor_{virt}}_{ \mathrm{eff} } }
\newcommand{\DMeffmCorvr}{ \mathbf{DM}^{Cor_{virt},r}_{ \mathrm{eff,-} } }
\newcommand{\DMeffmCorvl}{ \mathbf{DM}^{Cor_{virt},l}_{ \mathrm{eff,-} } }
\newcommand{\bZtrNis}{\bZ_{tr,nis}}
\newcommand{\Dcontr}{\caffDbaP}
\newcommand{\preF}{F}
\newcommand{\bP}{P}
\begin{document}

\title{Effective Grothendieck-Witt motives of smooth varieties.} 


\author{Andrei Druzhinin}             

\email{andrei.druzh@gmail.com}

%
%
\address{ Chebyshev Laboratory, St. Petersburg State University, 14th Line V.O., 29B, Saint Petersburg 199178 Russia}







\keywords{categories of motives, effective motives, sheaves with transfers, GW-correspondences, Witt-correspondences.}

\begin{abstract}
The category of effective Grothendieck-Witt-motives $\DMeffmGW(k)$ is defined. 
The construction starts with the category of GW-correspondences over a field $k$. 
In the case of an infinite perfect base field $k$ such that $char\,k\neq 2$
using the Voevodsky-Suslin method
we compute the functor $M^{GW}_{eff}\colon Sm_k\to \DMeffmGW(k)$ of Grothendieck-Witt-motives of smooth varieties 
and prove
that 
for any smooth scheme $X$ and a homotopy invariant sheaf with GW-transfers $F$
$$
Hom_{\DMeffmGW(k)}(M^{GW}_{eff}(X),  F[i]) \simeq H^i_{nis}(X, F)
$$
naturally in $X$ and $F$.
\end{abstract}


\maketitle


%
%
%

\section{Introduction.}

\subsection{Categories of GW-motives and Witt-motives.}

In this article we construct the categories of effective GW-motives $\DMeffmGW(k)$ and Witt-motives $\DMeffmW(k)$
over an infinite perfect field $k$, $\chark k\neq 2$. 
The construction is done by the Voevodsky-Suslin method originally used for the construction of the triangulated category of motives $\mathbf{DM}(k)$ and effective motives $\mathbf{DM}^{-}_\mathrm{eff}(k)$ (\cite{VSF_CTMht_Ctpretr}, \cite{VSF_CTMht_DM}, \cite{MVW_LectMotCohom}).

According to the Voevodsky-Suslin method the starting point is some additive category of correspondences between smooth schemes. In the case of GW-motives it is the category of so-called Grothendieck-Witt correspondences $GWCor_k$ and 
for the case of Witt-motives it is the category of Witt-correspondences $WCor_k$. Morphisms in these categories are defined by classes of quadratic spaces in the same category as the one used in the definition of $\K_0$-correspondences 
studied by Walker in \cite{W_MotComK} and by Garkusha and Panin in \cite{GarPan-Kmot12} and \cite{GarPan-Kmot14}. 
Namely, we consider 
the full subcategory $\mathcal P(X,Y)$ of the category of coherent sheaves on $X\times Y$ spanned by  
such sheaves $P$ that $\Supp P\subset X\times Y$ is finite over $X$ and the direct image of $P$ on $X$ is locally free.
To define the notion of quadratic space we consider the category $\mathcal P(X,Y)$ 
with the duality functor defined by $\mathcal Hom (-,\mathcal O(X))$. 

If we have the category of effective motive $\DMGWeff(k)$ (or $\DMWeff(k)$) 
equipped with a tensor structure, 
then we can define the category of non-effective motives $\DMGW(k)$ ($\DMW(k)$) 
as the homotopy categories of $\bGmw$-spectra in $\DMGWeff(k)$ ($\DMWeff(k)$). 

The category $\DMGW(k)$ 
gives a version of generalised motivic cohomology theory defined by presheaves $X\mapsto Hom_{\DMGW}(M^{GW}(X), \mathbb Z_{GWtr}(i)[j]))\simeq H^j_{nis}(X,\mathbb Z_{GWtr}(i))$ for $i\geq 0$, where $\mathbb Z_{GWtr}(i) = M^{GW}(\mathbb G_m^{\wedge i})[i]$. 
It is written in \cite{FB_EffSpMotCT} that 
if we consider the cohomology theory defined above with $\mathbb Z[1/p]$ coefficients, $p=\chark k$, then it is
isomorphic to 
the generalised motivic cohomology defined by Calmes and Fasel in \cite{CF_FinChWittCor}
with the same coefficients.
And so the result of \cite{GG_RecRatStMot} states that both of this categories 
are rationally isomorphic to the motivic homotopy groups, 
and corresponding categories of motives are rationally equivalent to 
$\mathbf{SH}(k)_{\mathbb Q}$.
Under this equivalence the computation of the GW-motives of smooth scheme $X$ presented here gives 
model for $\Omega_{\mathbb G_m}^\infty\Sigma_{\mathbb G_m}^\infty(X)$
in $\mathbf{SH}(k)_{\mathbb Q}$.
It is important to note that the functor $\mathbf{SH}(k)\to \DM^{GW}(k)$ can be constructed explicitly applying the reconstruction of $\mathbf{SH}(k)$ as the category of framed motives \cite{GarkushaPanin_FrMot} 

In \cite{ALP_WittSh-etsinvert} 
Ananievsky, Levine, Panin 
have constructed
the category of Witt-motives via the category of modules over the Witt-ring sheaf, 
and it is proved that this category satisfies Morel's conjecture, namely, 
it is such triangulated category that 
is rationally equivalent to the minus part $\mathbf{SH}^-(k)_{\mathbb Q}$ of the stable motivic homotopy category.
Hypothetically constructed here category $\DMW(k)$ is equivalent to the category of Witt-motives from \cite{ALP_WittSh-etsinvert}, and so the result of this article gives a fibrant replacements in this category. 

\subsection{The main result.}

According to the universal property of a category of motives, 
any category of motives satisfying Nisnevich Mayer-Vietoris property and homotopy invariance property should be defined as a localisation of some triangulated category with respect to the Nisnevich squares and morphisms of the type $X\times\affl\to X$. 
In case of effective $\GW$-motives, this triangulated category is 
the derived category of presheaves with $\GW$-transfers $\Dba(\PreGW)$. 
The main advantage of the Voevodsky-Suslin method is that it gives more explicit description of the category of effective motives as the full subcategory of the derived category $\Dba(\ShNGW)$ and provides a computation of the motives of smooth varieties. 

The main results we've proved in the case of GW-(Witt-)motives have almost the same formulation to the original case of $\DMeff$ 
except that we should consider sheafification of the presheaves $GWCor(-,X)$ in the formula for motive of a smooth scheme, since these presheaves aren't sheaves. 
\begin{definition}\label{def:intr:DMeff}
The \emph{category of effective GW-motives} $\DMeffmGW(k)$ (\emph{or Witt-motives} $\DMeffmW(k)$) 
over an infinite perfect field $k$, $\chark k\neq 2$, is 
the full subcategory of the derived category $\DbaShNGW$ ($\DbaShNW$)
spanned by complexes $A^\bullet$ with homotopy invariant sheaf cohomologies $h_{nis}^i(A^\bullet)$.
The functor $M^{GW}\colon Sm_k\to \DMeffmGW(k)$ 
is defined as
\begin{multline}\label{eq:intr:MGW}
M^{GW}(X) = \mathcal Hom_{\DbaShNGW}(\Delta^\bullet, {GWCor}_{nis}(-,X) )=\\
[\cdots \to {GWCor}_{nis}(-\times\Delta^i,X)\to \cdots
 \cdots \to {GWCor}_{nis}(-,X)]
,\end{multline}
where $\Delta^i$ denotes affine simplexes (see def. \ref{def:affsimpl}),
${GWCor}_{nis}(-,X)$ denotes Nisnevich sheafification of 
$GWCor(-,X)$ 
equipped with GW-transfers in a unique possible way (see proposition \ref{prop:uniqnistrans}). 
\end{definition}
\begin{theorem}\label{th:intr:DMeff} 
Suppose $k$ is an infinite perfect field, $\chark k\neq 2$;
then \begin{itemize}[leftmargin=10pt]
\item[a)](def. \ref{def:DMeff} and th. \ref{th:DMeff} points 1 and 2) the category $\DMeffmGW(k)$
is equivalent to 
the localization of the derived category $\DbaShNGW$ 
in respect to morphisms of the form $X\times \affl\to X$, $X\in Sm_k$,
and the localization functor is isomorphic to 
\begin{equation}\label{eq:intr:SCstShN}
C^*(-)=\mathcal Hom(\Delta^\bullet,-)\colon \DbaShNGW\to \DMeffmGW(k).
\end{equation}
\item[b)](theorem \ref{th:DMeff} point 3) there is a natural isomorphism
\begin{equation}\label{eq:intr:MTC}
Hom_{\DMGWeff(k)}(M^{GW}_{eff}(X),\mathcal F[i]) \simeq H^i_{nis}(X,\mathcal F), i\geq 0,
\end{equation}
for any
smooth variety $X$ and
homotopy invariant sheaves with GW-transfers $\mathcal F$.
\item[c)](proposition \ref{prop:MotiveProd})
There is a tensor structure on 
$\DMeffmGW (k)$ such that
\begin{equation}\label{eq:intr:tensprodDMGW}M^{GW}_{eff}(X)\otimes M^{GW}_{eff}(Y) \simeq M^{GW}_{eff}(X\times Y) \end{equation}
for $X,Y\in Sm_k$.
\end{itemize}
\end{theorem}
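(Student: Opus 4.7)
The plan is to mimic Voevodsky's Suslin-complex argument for $\mathbf{DM}^{-}_{\eff}(k)$, substituting GW-transfers for ordinary Voevodsky transfers throughout. The crucial technical prerequisite, which I expect to dominate the preceding sections, is a strict homotopy invariance theorem for presheaves with GW-transfers: if $F$ is a homotopy invariant presheaf with GW-transfers, then its Nisnevich sheafification $F_{nis}$ is again homotopy invariant and inherits a unique GW-transfer structure (see Proposition~\ref{prop:uniqnistrans}), and moreover each cohomology presheaf $U \mapsto H^i_{nis}(U, F_{nis})$ is again a homotopy invariant presheaf with GW-transfers. This is the direct GW-analogue of Voevodsky's fundamental theorem, and it is what every other part of the argument rests on.

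For part (a), granted strict homotopy invariance, the Suslin functor $C^{*}(-) = \mathcal Hom(\Delta^\bullet, -)$ carries every object of $\DbaShNGW$ into a complex whose Nisnevich cohomology sheaves are homotopy invariant, so $C^{*}$ factors through $\DMeffmGW(k)$. One then checks that $C^{*}$ exhibits $\DMeffmGW(k)$ as a Bousfield reflection of $\DbaShNGW$ along the class of maps $X \times \affl \to X$: the unit $\mathcal F \to C^{*}(\mathcal F)$ is an $\affl$-equivalence since each $\Delta^{i}$ is $\affl$-contractible, while any object killed by $C^{*}$ lies in the localising subcategory generated by cones of $X \times \affl \to X$, as can be seen from the convergent hypercohomology spectral sequence combined with homotopy invariance of the cohomology sheaves. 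For part (b), the adjunction from (a) applied to the already $\affl$-local target $\mathcal F$ yields
\begin{equation*}
\Hom_{\DMeffmGW(k)}(M^{GW}_{eff}(X), \mathcal F[i]) \;\simeq\; \Hom_{\DbaShNGW}(GWCor_{nis}(-,X), \mathcal F[i]),
\end{equation*}
and the right side is computed by resolving $\mathcal F$ by Nisnevich-injective sheaves with GW-transfers and applying the GW-transfer Yoneda lemma $\Hom_{\ShNGW}(GWCor_{nis}(-,X), \mathcal G) \simeq \mathcal G(X)$; strict homotopy invariance then guarantees that this injective resolution in $\ShNGW$ also computes ordinary Nisnevich cohomology when evaluated on $X$, giving the desired identification with $H^{i}_{nis}(X, \mathcal F)$.

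Part (c) is formal. The category $GWCor$ inherits a symmetric monoidal structure from the Cartesian product of schemes together with the external tensor product of quadratic spaces over the products, and this extends to $\PreGW$ and to $\ShNGW$ by Day convolution followed by sheafification. The monoidal structure descends to $\DMeffmGW(k)$ because the $\affl$-local subcategory is a $\otimes$-ideal — the tensor of an $\affl$-contractible object with anything is again $\affl$-contractible, as one sees by tensoring with $X \times \affl \to X$ on one side. The required isomorphism $M^{GW}_{eff}(X) \otimes M^{GW}_{eff}(Y) \simeq M^{GW}_{eff}(X \times Y)$ then follows from the identity $GWCor_{nis}(-,X) \otimes GWCor_{nis}(-,Y) \simeq GWCor_{nis}(-, X \times Y)$ on representables after applying $C^{*}$. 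The sole genuine obstacle in the entire programme is the strict homotopy invariance theorem for GW-transfers; every remaining step is a careful but essentially mechanical adaptation of the arguments in \cite{VSF_CTMht_DM} and \cite{MVW_LectMotCohom}.
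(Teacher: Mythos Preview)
Your overall strategy for parts (a) and (c) matches the paper's and is sound: the paper also proves (a) by exhibiting a semi-orthogonal decomposition $\DbaShNGWs = \langle \iaffDbaSh, \caffDbaSh\rangle$ with reflector $C^*$, using strict homotopy invariance exactly as you describe, and handles (c) by extending the product on $GWCor$ via left Kan extension and checking that $\affl$-contractible objects form a $\otimes$-ideal.

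The gap is in part (b). You write that ``strict homotopy invariance then guarantees that this injective resolution in $\ShNGW$ also computes ordinary Nisnevich cohomology when evaluated on $X$''. This is not what strict homotopy invariance says, and it is not true for that reason. What you need is that the forgetful functor $\ShNGW \to \ShN$ preserves injectives, equivalently that its left adjoint is exact, equivalently that the derived free-transfer functor $\LtrDPre\colon \Dba(Pre)\to \DbaPreGWs$ preserves Nisnevich quasi-isomorphisms. None of this has anything to do with homotopy invariance of $\mathcal F$; it must hold for \emph{every} sheaf with GW-transfers, and it is a statement about how GW-correspondences interact with the Nisnevich topology, not with $\affl$. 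The paper isolates the required input as condition (NL):
\[
GWCor(U,X)\;=\;\bigoplus_{x\in X} GWCor(U,X^h_x)\quad\text{for henselian local }U,
\]
proves it directly from the definition of GW-correspondences (Proposition~\ref{prop:GWWittNisloc}), and uses it to obtain Theorem~\ref{th:ExtTr=Hnis}: $Ext^i_{\ShNGW}(\bZ_{GW,nis}(X),F)=H^i_{nis}(X,F)$. This is also what makes $\ShNGW$ abelian in the first place, a point you take for granted. So your claim that ``the sole genuine obstacle\ldots\ is the strict homotopy invariance theorem'' is wrong: there are two independent inputs, (SHI) and (NL), and your proof of (b) invokes the wrong one at the crucial step.
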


Then one can define the category of (non-effective) GW-motives and Witt-motives by stabilisation in respect to $\bGmw=\bGm/pt$ and
using the cancellation theorem for GW-correspondences (Witt-correspondences)
deduce the isomorphism \eqref{eq:intr:MTC} for (non-effective) motives,
though here we are concentrated on the category of effective motives and 
we
leave the cancellation theorem for further works. 

\subsection{About the method.}

Let's present 
our  result in the following abstract form 
\begin{theorem}\label{th:intr:abs}
Suppose $\Cor$ is a category enriched over abelian groups and such that objects of $\Cor$ are smooth schemes over a base $S$,
and there is a functor $Sm_S\to \Cor$, which is identity on objects 
($\Cor$ is a ringoid in sense of \cite[def. 2.1]{GarPan-Kmot14}).
Suppose in addition that for any essential smooth local henselian scheme $U$ over $S$ the relations (SHI) and (NL)
holds (see the next page for (SHI) and (NL)).

Let $\DMeffmCorv(S)$ (and $\DMeffCorv(S)$) be 
the localisation of $\DbaPreTrs$ (and $\DPreTrs$)
with respect to Nisnevich quasi-isomorphisms
(homomorphisms of complexes of presheaves such that induced homomorphisms of germs are quasi-isomorphisms, see def. \ref{def:Dnisloc}) 
and morphisms of the form $X\times\affl\to X$, $X\in Sm_S$.
Then 
$\DMeffmCorv(S)$ ($\DMeffCorv(S)$) satisfies 
the similar properties to the points a,b,c of theorem \ref{th:intr:DMeff}.
\end{theorem}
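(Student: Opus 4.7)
My plan is to run the Voevodsky--Suslin machine abstractly, with (SHI) and (NL) playing the role of the concrete verifications carried out for $GWCor$ and $WCor$. I would organise the argument in four stages.

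First, using (NL) I would prove that Nisnevich sheafification is compatible with $\Cor$-transfers, i.e.\ that for every $F\in\PreTrs$ the sheaf $\lnis F$ carries a unique $\Cor$-transfer structure for which $F\to\lnis F$ is a morphism in $\PreTrs$ (the abstract analogue of Proposition \ref{prop:uniqnistrans}). This gives an adjoint pair $\lNisTr\colon \DbaPreTrs\rightleftarrows \DbaShNTrs\colon\RnisTr$ and, by a standard manipulation of Verdier quotients, identifies $\DMeffmCorv(S)$ with the localisation of $\DbaShNTrs$ at the $\affl$-morphisms only. At this point the Nisnevich-localisation has been effectively traded for passage to sheaves.

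Second, I would record the formal properties of the Suslin complex $C^*(F)=F(-\times\Delta^\bullet)$: by the usual simplicial $\affl$-contraction argument, $C^*(F)$ is $\affl$-local for every $F$, and the natural map $F\to C^*(F)$ is an $\affl$-equivalence in $\DbaPreTrs$. Combined with stage one, this exhibits $C^*$ as the left adjoint of the inclusion $\iaffDbaSh\hookrightarrow\DbaShNTrs$ and identifies $\DMeffmCorv(S)$ with $\iaffDbaSh$, giving point (a).

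Third, and this is the heart of the argument, I would use (SHI) to show that for any homotopy invariant Nisnevich sheaf with $\Cor$-transfers $\mathcal F$, the Nisnevich cohomology presheaves $X\mapsto H^i_{nis}(X,\mathcal F)$ are themselves homotopy invariant. The hypothesis (SHI) guarantees that the stalks at essentially smooth local henselian schemes over $S$ are $\affl$-invariant; feeding this into the Cousin / co-niveau resolution of $\mathcal F$ promotes pointwise $\affl$-invariance to the level of whole cohomology presheaves. Given this, point (b) follows by the standard hyper-cohomology spectral sequence identification
\[
\Hom_{\DMeffmCorv(S)}\bigl(M(X),\mathcal F[i]\bigr)\simeq H^i_{nis}(X,\mathcal F),
\]
and one also concludes that $C^*$ preserves $\affl$-invariance at the level of sheaves, completing point (a).

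Finally, for the tensor structure in (c), I would put $[X]\otimes[Y]:=[X\times_S Y]$ on representables in $\PreTrs$, extend termwise to complexes of representables, and derive via a projective/cofibrant replacement. Descent of this bifunctor through the Nisnevich quasi-isomorphisms is handled by stage~one together with the compatibility of sheafification with products, while descent through $\affl$-morphisms is immediate because $\affl\times X\to X$ is a projection. The expected obstacle is stage three: extracting from the local hypothesis (SHI) the global statement that Nisnevich cohomology sheaves of a homotopy invariant sheaf with $\Cor$-transfers are again homotopy invariant. This is the classical core of Voevodsky's argument, and in the abstract setting it requires carefully rebuilding the Cousin-complex/co-niveau spectral sequence proof using only (SHI) and (NL), which is precisely where the input from the specific geometry of $GWCor$ or $WCor$ is abstracted into the axioms.
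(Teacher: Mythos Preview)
Your overall architecture matches the paper's: (NL) handles the first square of diagram~\eqref{diag:Constr} and gives the sheaf-level category (your stage one is the content of Section~\ref{sect:GWNiscoh}), the Suslin complex $C^*$ gives a semi-orthogonal decomposition $\DbaPreTrs=\langle\iaffDbaP,\caffDbaP\rangle$ valid for any ringoid (your stage two is Theorem~\ref{affsodDPW}), and the tensor structure is handled as in Section~\ref{sect:TensStr}.

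The misconception is in stage three. You read (SHI) as saying only that the stalks satisfy $\mathcal F(\affl\times U)=\mathcal F(U)$, and then propose to promote this to homotopy invariance of $X\mapsto H^i_{nis}(X,\mathcal F)$ via a Cousin/coniveau argument. But (SHI) already says more: in the introduction's formulation it also asserts $H^i_{nis}(\affl\times U,\mathcal F)=0$ for $i>0$ and henselian $U$, and this is equivalent---by the Leray spectral sequence for $\affl_X\to X$, not by any coniveau filtration---to the global statement that all $h^i_{nis}(\mathcal F_{nis})$ are homotopy invariant. Indeed, in Section~\ref{sect:DMeff} the paper simply \emph{takes} (SHI) to be this global statement and applies it directly in Theorem~\ref{th:affsodDSNtr} to transport the semi-orthogonal decomposition to $\DbaShNTrs$. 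The Cousin/coniveau machinery you anticipate is precisely the hard geometric input needed to \emph{prove} (SHI) in the concrete cases of $GWCor$ and $WCor$; that is the content of the companion paper~\cite{AD_StrHomInv}, and in the abstract framework of Theorem~\ref{th:intr:abs} it is packaged entirely into the axiom. Your ``expected obstacle'' is therefore not an obstacle here: stage three collapses to a direct application of the hypothesis, and the genuine difficulty has been moved outside the scope of this theorem by design.
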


The condition (SHI) 
is equivalent to the Strictly Homotopy Invariance theorem, which is proven for $\GW$- and Witt-correspondences in \cite{AD_StrHomInv}. 
All what we need except (SHI) 
is condition (NL), and the condition  
(NL) which is very easy to check.
The assumptions (NL) and (SHI) are equivalent to the definition of strictly $\affl$-invariant 
V-ringoid without the second point \cite[def. 2.4]{GarPan-Kmot14}. 
In fact, 
we reduce both conditions from the definition of V-ringoid 
that deal with topology 
to one condition (NL). 
The condition (NL) uses the correspondences between local schemes which are defined formally as morphisms in the category of pro-objects of the category $\Cor$, and for all known examples such correspondences can be defined explicitly.
It is easy to see that the condition (NL) 
holds for any category of correspondences $\Cor$ with "finite supports", i.e. 
such a ringoid $\Cor$ that any 
$\Phi\in \Cor(X,Y)$ can be represented as a composition $X\xrightarrow{\Phi^\prime} Z\xrightarrow{g} Y$, where $Z$ is finite scheme over $X$, $\Phi^\prime\in \Cor(X,Z)$, and $g$ is a morphism of schemes; so (NL) is true for all known examples of ringoids studied by the Suslin-Voevodsky method.
Moreover if we assume (SHI) and properties a),b) from the theorem, then (NL) follows. Thus we
get an easy criteria when the Voevodsky-Suslin method is applicable
\footnote{It was explained to the author by G.~Garkusha that there is another axiom instead of (NL) which can be used to replace all axioms dealing with Nisnevich topology and this axiom doesn't involve correspondences between local schemes. So in different cases of $\Cor$ it can be easier to check different variants of this axiom.}. 

To prove theorem \ref{th:intr:abs} we consider the following commutative diagram of adjunctions
\footnote{The third square relates to the cancellation theorem that is not subject of the article.} 
\footnote{In the case of bounded above derived categories there are only continuous (undotted) arrows of the diagram and the dotted arrows exist in the case of unbounded derived categories.}
\begin{equation}\label{diag:Constr}\xymatrix{
\Dun(Pre) \ar[r]^{L_{tr}}\ar[d]& \DbaPreTrs \ar[r]^{L_{\affl}^{tr}}\ar[d] \ar@<0.5ex>[l]^{F_{tr}}& 
 \DbaAffCorvs \ar[r]^(0.4){\Sigma^{\infty}_{\bGm}}\ar[d] \ar@<0.5ex>[l]& 
  \mathrm{Ho}(\mathrm{Sp}_{\bGmw}\DbaAffCorvs)\ar[d]\ar@<0.5ex>@{-->}[l]\\
\Dba(\ShN) \ar[r]^{L_{tr}^{nis}}\ar@{}[ru]|{(1)} \ar@<1ex>@{-->}[u]& 
 \DbaShNTrs \ar[r]^{L^{nis,tr}_{\affl}}\ar@<1ex>@{-->}[u]\ar@{}[ru]|{(2)} \ar@<0.5ex>[l]& 
  \DMeffmCorv \ar[r]|{\Sigma^{\infty}_{\bGm}}\ar@<1ex>@{-->}[u]\ar@{}[ru]|{(3)} \ar@<0.5ex>[l]& 
   \mathbf{DM}^{\Cor}\ar@<1ex>@{-->}[u]\ar@<0.5ex>@{-->}[l],\\ 
}\end{equation}
where 
$\PreTrs=\PreTr$, $\ShNTrs=\ShNTr$,
$F_{tr}$ is forgetful, $L_{tr}$ is the left derived to the left Kan extension, and $L^{tr}_{\affl}$ is the localisation in respect to morphisms $X\times\affl\to X$. 
The first row exists for an arbitrary ringoid $\Cor$ by general arguments, and the category $\DbaAffCorvs$ always satisfies the similar properties as in def. \ref{def:DMeff} and th. \ref{th:DMeff} (with presheaves instead of sheaves).
The second row is obtained by Nisnevich localisation form the first and the commutativity of the diagram yields the required properties of $\DMeffmCorv$.
%
%

The existence of the second row and the commutativity of diagram \eqref{diag:Constr} is equivalent to the exactness of the functors in the first row in respect to Nisnevich quasi-isomorphisms. The right adjoint functors in the first row are Nisnevich exact for any $\Cor$ by a trivial reasons.
The Nisnevich exactness of the left adjoint functors follows from
a good correlation of the Nisnevich topology with the correspondences $\Cor$ and with the structure of the category with interval on $Sm_k$: 
\par
1) 
$L_{tr}$ is Nisnevich exact because of 
the equalities 
$$\label{eq:intr:NisTr}
\Cor(U,X)=\bigoplus\limits_{x\in X} \Cor(U,X^h_x),
\eqno (NL)$$
where $U$ is local henselian and $X\in Sm_k$.

2) 
$L^{tr}_{\affl}$ is Nisnevich exact 
because of 
the equalities 
$$
\label{eq:intr:NisAffTr}
H^i_{nis}(\affl\times U,\mathcal F)=\begin{cases}\mathcal F(\affl\times U)=\mathcal F(U), i=0,\\ 0, \text{otherwise} ,\end{cases}
\eqno (SHI)$$
where $U$ is local henselian and $\mathcal F$ is a homotopy invariant presheaf with transfers; the equality \eqref{eq:intr:NisAffTr} is equivalent to the strictly homotopy invariance theorem proved in \cite{AD_StrHomInv}.

3) The commutativity of the third square follows from the equalities
\begin{equation*}\label{eq:intr:NisGmTr}
H^i_{nis}(\bGm\times U,\mathcal F)=\begin{cases}\mathcal F(\bGm\times U), i=0,\\ 0, \text{otherwise} \end{cases}
\end{equation*}
for the same $U$ and $\mathcal F$ as above; the last equality follows from the same excision isomorphisms proved in \cite{AD_StrHomInv}
on which the proof of the strictly homotopy invariance theorem is based.

Let's note 
that according to the standard variant of the Voevodsky-Suslin method to prove that the category $\ShNTrs$ is abelian and that
\begin{equation}\label{eq:intr:CohNisTr=CohNis}Ext_{\ShNTr}(\mathbb Z_{tr,nis}(X),\mathcal F)\simeq H^i(X,\mathcal F)\end{equation}
we should present a construction of transfers on the Nisnevich sheafification and contracting homotopy for the complex
$\dots\to\bZ_{tr}(\mathcal U^n)\to \dots\to \bZ_{tr}(\mathcal U)\to \bZ_{tr}(X)$ 
for any $X\in Sm_k$ and Nisnevich covering $\mathcal U\to X$.
Indeed at least in the case of $\GW$- (Witt-) correspondences such construction exists, see \cite[remark 3.16]{AD_DMGWeff} for the construction of transfers on the sheafification, but even in this particular case this way is much longer and more technical.
Let's note also that there is a way to proof the theorem for $\GW$- and Witt-correspondences via the method of \cite{GarPan-Kmot14}, since $\GW$ and $Witt$-correspondences are strictly $\affl$-invariant V-ringoid.
The main feature of 
the proof presented here is that it 
allows 
to separate general arguments and special properties of correspondences and 
to concentrate multiple checks relating to Nisnevich topology in one easily checked criteria. 

\subsection{The text review.}

In section \ref{sect:correspondences} we recall the definition of GW- and Witt-correspondences and prove some elementary properties used in the next sections, in particular we prove relation (NL). 

Sections \ref{sect:GWNiscoh} deals with the first square of \eqref{diag:Constr}. 
It is proven that 
for any correspondences satisfying condition (NL) 
category $\ShNTrs$ is abelian and
isomorphism \eqref{eq:intr:CohNisTr=CohNis} holds. 
Let's note that formally 
we should prove that $\ShNTrs$ is abelian before \eqref{eq:intr:CohNisTr=CohNis}, 
because otherwise the terms of \eqref{eq:intr:CohNisTr=CohNis} are undefined.
Nevertheless, in section \ref{sect:GWNiscoh} 
we prove firstly equality \eqref{eq:intr:CohNisTr=CohNis} in some modified form 
that has sense in any case (see lemma \ref{lm:HomNisloc=CohNis} and def. \ref{def:Dnisloc}) 
and then we deduce both mentioned results simultaneously.

In section \ref{sect:TensStr} we define the category $\DMeffmCorv$ as $\affl$-localisation of 
$\DbaShNTrs$ and define the tensor structure on the category of motives.

Section \ref{sect:DMeff} deals with the second square of \eqref{diag:Constr}. 
We prove that "action" of affine line as an interval in $Sm_k$ induce a semi-orthogonal decomposition on the bounded above derived category $\DbaShNTrs=\langle\iaffDbaSh,\caffDbaSh\rangle$,
where 
$\iaffDbaSh$ is spanned by homotopy invariant objects, and 
$\caffDbaSh$ are generated by $\affl$-contractable objects.
So the category of effective GW-motives $\DMeffmCorv$ is "homotopy invariant part" of $\DbaShNTrs$. 
This yields that the category of effective motives defined in the previous section is equivalent to the one given by def. \ref{def:DMeff} and theorem \ref{th:intr:abs}.

\vspace{3pt}

\subsection{Acknowledgements}
Acknowledgement to I.~Panin, who encouraged me to work on this project, for helpful discussions.

\subsection{Notation}
All considered schemes are a separated noetherian schemes of finite type over a separated noetherian base unless otherwise noted, 
and $Sm_k$ is category of smooth schemes over a field $k$.
We denote $Coh(X)=Coh_X$ the category of coherent sheaves on a scheme $X$.
%
We denote by $Z(f)$ the vanish locus of a regular function $f$ on a scheme $X$.
%
We use the symbols $\mathcal L$ and $\mathcal R$ for the left and right derived functors.
And let's note that we always consider objects of an abelian category as an objects in the derived category via the complexes concentrated in degree zero.

\section{GW- and Witt-correspondences}\label{sect:correspondences}

In this section we recall definition of GW- and Witt-correspondences and prove some properties used in next sections. 
All definitions are given over an separated noetherian base scheme $S$. 

\begin{definition}\label{def:catCohfcalP}
For a morphism of schemes $p\colon Y\to X$
let's denote by $Coh_{fin}(p)$ (or $Coh_{fin}(Y_X)$)
the full subcategory of the category of coherent sheaves on $Y$ 
  spanned by sheaves $\mathcal F$ such that $\Supp \mathcal F$ is finite over $X$.
Denote by  
$\mathcal P(p)$ (or by $\mathcal P(Y_X)$) 
the full subcategory of $Coh_{fin}(Y)$ 
  spanned by sheaves $\mathcal F$ such that $p_*(\mathcal F)$ is locally free sheaf on $X$. 

For schemes $X$ and $Y$ over a base scheme $S$ let 
$$Coh_{fin}^S(X,Y) = Coh_{fin}(X\times_S Y\to X),\;\mathcal P^S(X,Y) = \mathcal P(X\times_S Y\to X).$$
Sometimes we omit the index $S$ if it is clear from the context. 
\end{definition}
\begin{remark}
In the case of morphism of affine schemes $Y\to X$,
$\mathcal P(Y\to X)$ is equivalent to the full subcategory in the 
category of $k[Y]$-modules with objects being finitely generated and projective over $k[X]$.
\end{remark}

The functors $k[Y]\cmod\to {k[Y]\cmod}^{op}\colon M\mapsto Hom_{k[X]}(M,k[X])$ for finite morphisms $Y\to X$ of affine schemes,
defines in a canonical way a functor $$D_X\colon Coh_{fin}(Y_X)\to Coh_{fin}(Y_X)^{op}$$ 
for any morphism of schemes $Y\to X$.
Moreover there for any morphism $p\colon Y\to X$ is a natural isomorphism $D_X^2\simeq Id_{Coh_{fin}(p)}$ and the restriction of $D_X$ to the subcategory $\mathcal P(p)$ gives an exact category with duality $(\mathcal P(Y\to X),D_X)$
(see \cite[section 2]{AD_StrHomInv} for details).

The tensor product of the coherent sheaves induce the functors of the categories with duality
\begin{equation}\label{eq:compCorCor}-\circ -\colon  (\mathcal P^S(Y,Z),D_Y) \times (\mathcal P^S(X,Y),D_X) \to (\mathcal P^S(X,Z),D_X)\end{equation}
defined for any schemes $X$, $Y$ and $Z$ over the base $S$ and natural in $X,Y,Z$.

\begin{definition}\label{def:WCor}
\emph{The category $GWCor_S$} is the additive category  
with objects smooth schemes over $S$
and homomorphism groups defined as 
$$GWCor_S(X,Y) = GW(\mathcal P^S(X,Y),D_X)$$
where $GW$ is Grothendieck-Witt-group of the exact category with duality,
i.e. the group completion of the groupoid of non-degenerate quadratic spaces $(P,q)\colon $ $P\in \mathcal P^S(X,Y)$, $q\colon P\simeq D_X(P)$.

The composition in $GWCor$ is induced by functor \eqref{eq:compCorCor},
and identity morphism $Id_X=[(\mathcal O(\Delta),1)],$ 
where $\Delta$ denotes diagonal in $X\times_S X$.

In the same way we define \emph{the category $WCor_S$} via the Witt-groups of the exact category with duality, which are the factor-groups of $GW$ such that classes of metabolic spaces are zero
(see Balmer, \cite{Bal_DerWitt}). 
 
\end{definition}  
There is a functor $Sm_k\to GWCor$ ($Sm_k\to WCor$)
that takes a morphism $f\in Mor_{Sm_k}(X,Y)$ to the class of the quadratic space
$[(\mathcal O(\Gamma_f), 1)],$
where $\Gamma_f\subset Y\times X$ denotes the graph of a morphism $f\in Mor_{Sm_k}(X,Y)$, 
and $1$ denotes the unit quadratic form on a free coherent sheaf of a rank one on $\Gamma_f\simeq X$, 

As usual we call by \emph{a presheaf of abelian groups} on $Sm_S$ an additive functor $F\colon Sm_S\to Ab$, i.e. a functor such that $F(X_1\amalg X_2)=F(X_1)\oplus F(X_2)$. A \emph{presheaf with GW-transfers} is an additive functor $F\colon GWCor_S\to Ab$,
and similarly presheaves with Witt-transfers is an additive functor $F\colon WCor_S\to Ab$. 
A \emph{homotopy invariant presheaf} on $Sm_S$ is a presheaf $\mathcal F$ such that the natural homomorphism $\mathcal F(X)\simeq \mathcal F(\affl\times X)$ is an isomorphism.
A homotopy invariant presheaf with GW-(Witt-)transfers is a presheaf with GW-(Witt-)transfers that is homotopy invariant as a presheaf on $Sm_S$.     

The homotopy invariant presheaves with GW-transfers and Witt-transfers satisfy following important properties:
\begin{theorem}[see \cite{ChepInjLocHIiWtrPreSh} theorem 1, for $WCor$; see \cite{AD_StrHomInv} theorem 7.3, for $GWCor$]\label{th:HGWCorinj} 
Suppose $\mathcal F$ is a homotopy invariant presheaf with GW-transfers (Witt-transfers)  
and $U$ is a local scheme over a base filed $k$,
then the restriction homomorphism 
$\mathcal F(U)\to \mathcal F(\eta)$
is injective, where $\eta\in U$ is generic point.
\end{theorem}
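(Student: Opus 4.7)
The plan is to follow the classical Voevodsky--Suslin injectivity argument for homotopy invariant presheaves with transfers on local schemes, now carrying along the quadratic data required by $GW$- and Witt-correspondences. By a standard limit argument one reduces to the case $U=\mathrm{Spec}\,\mathcal{O}_{X,x}$ for a smooth affine $k$-variety $X$ and a point $x\in X$; a class $a\in\mathcal{F}(U)$ with $a|_\eta=0$ is represented by some $\tilde a\in\mathcal{F}(V)$ on an affine open $V\subset X$ which vanishes on a dense open $V_0\subset V$. Writing $Z=V\setminus V_0$ and shrinking $V$ around $x$, we may arrange that $x\in Z$ and that $Z$ is cut out by a single regular function and smooth over $k$.

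The geometric heart of the argument is a Gabber-type presentation: after a further shrinking of $V$ around $x$, one constructs a smooth affine $k$-scheme $B$ of dimension $\dim X-1$ together with a finite surjective morphism $\pi\colon V\to\affl\times B$ such that $\pi|_Z\colon Z\to\{0\}\times B$ is a closed immersion and $\pi^{-1}(\{0\}\times B)=Z\amalg Z'$ Nisnevich-locally near $Z$. Since $\pi$ is finite flat with $\pi_*\mathcal{O}_V$ locally free, the multiplication-and-trace pairing $\pi_*\mathcal{O}_V\otimes \pi_*\mathcal{O}_V\to\mathcal{O}_{\affl\times B}$ gives, after possibly scaling by a unit to absorb the discriminant, a non-degenerate symmetric form, producing a transfer class $\Phi\in GWCor(\affl\times B,V)$ whose support is $Z\amalg Z'$.

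Applying $\mathcal{F}$ to $\Phi$ and restricting along $i_t\colon B\hookrightarrow\affl\times B$ for $t\in\{0,1\}$ yields two elements of $\mathcal{F}(B)$ that coincide by homotopy invariance of $\mathcal{F}$; the Nisnevich excision condition built into the presentation identifies one of them with $\tilde a|_U$, while the other factors through $\mathcal{F}(Z)\oplus \mathcal{F}(Z')$, on which $\tilde a$ vanishes by construction, giving $a=0$. The main obstacle absent from the classical finite-correspondence setting is the requirement that this transfer be realised as a genuine non-degenerate quadratic space on $\pi_*\mathcal{O}_V$: one must verify non-degeneracy of the trace pairing throughout the $\affl$-deformation and check that the resulting identity already holds in $GW$ (resp.\ $W$) rather than only modulo some stabilisation. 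This quadratic refinement is precisely the technical content of \cite{AD_StrHomInv} and \cite{ChepInjLocHIiWtrPreSh}, to which we refer for the full proof.
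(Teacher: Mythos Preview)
The paper does not give its own proof of this statement: it is stated with explicit attributions to \cite{ChepInjLocHIiWtrPreSh} (for $WCor$) and \cite{AD_StrHomInv} (for $GWCor$) in the theorem header, and no proof environment follows. Your proposal likewise ends by deferring to exactly these references, so in that sense you are aligned with the paper.

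The sketch you give is the correct overall shape of the argument carried out in those references: reduction to the local ring of a smooth affine variety, a Gabber-type relative-curve presentation producing a finite flat map to $\affl\times B$, a transfer built from a symmetric form on the pushforward, and the comparison of the two fibres via homotopy invariance. One point to be careful about if you ever write this out in full: the trace form on $\pi_*\mathcal O_V$ is not what is used directly in \cite{AD_StrHomInv}; rather one works with explicit quadratic spaces supported on the vanishing locus of a chosen function and controls their classes in $GW$ (or $W$) by hand, because the naive trace pairing need not remain non-degenerate along the whole $\affl$-family and ``scaling by a unit to absorb the discriminant'' is not in general available. Also, your final identification mixes $B$ and $U$: the homotopy yields an equality in $\mathcal F(U)$ after composing the $GW$-correspondence with the map $U\to B$, not directly in $\mathcal F(B)$. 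These are exactly the technical points handled in the cited papers, so your deferral is appropriate.
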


\begin{theorem}[see \cite{AD_WtrSh} theorem 3, for $\mathcal F_{nis}$ in the case of $WCor$; see \cite{AD_StrHomInv} theorem 8.3, theorem 8.5, for $h_{nis}(\mathcal F_{nis})$, $i\geq 0$ 
in both cases $GWCor$, $WCor$]
\label{th:StrictHGWCor}
Suppose $\mathcal F$ is a homotopy invariant presheaf with GW-transfers (Witt-transfers)
over an infinite perfect field $k$, $char k\neq 2$,
then the associated Nisnevich sheaf ${\mathcal F}_{nis}$ and Nisnevich cohomology presheaves $h_{nis}({\mathcal F}_{nis})$ are homotopy invariant.
 
\end{theorem}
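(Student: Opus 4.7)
The plan is to follow the Voevodsky template adapted to the $\GW$-/Witt-setting. The two key inputs would be Theorem \ref{th:HGWCorinj} (injectivity at the generic point on local schemes) and excision-type results for the exact category with duality $(\mathcal{P}(X,Y),D_X)$ that produce transfers from punctured local neighbourhoods, available over an infinite perfect field of characteristic $\ne 2$.

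\emph{Step 1: Homotopy invariance of $\mathcal{F}_{nis}$.} First I would establish that for every essentially smooth local henselian $U$ over $k$, the canonical map $\mathcal{F}(U)\to\mathcal{F}_{nis}(U)$ is an isomorphism. Injectivity follows directly from Theorem \ref{th:HGWCorinj} by comparing the restriction along a Nisnevich refinement to restriction to the generic point. Surjectivity is produced by a Gabber/Panin-style geometric presentation: given a Nisnevich cover $V\to U$ trivial over the closed point and a section of $\mathcal{F}(V)$ coming from the generic point, one constructs a $\GW$-/Witt-correspondence from $U$ to $V$ splitting the projection up to a hyperbolic summand, which lifts the section to a genuine element of $\mathcal{F}(U)$. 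Once this identification on local henselian schemes is in place, homotopy invariance of $\mathcal{F}_{nis}$ reduces immediately to homotopy invariance of $\mathcal{F}$ itself applied to $U$ and to the henselization of $\affl\times U$ at points of the zero section.

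\emph{Step 2: Homotopy invariance of $h^i_{nis}(\mathcal{F}_{nis})$.} Using Step 1 together with the injectivity theorem, I would construct a Cousin/Gersten-type flasque resolution of $\mathcal{F}_{nis}|_X$ on any smooth $X$,
\begin{equation*}
0\to\mathcal{F}_{nis}|_X\to\bigoplus_{x\in X^{(0)}}(i_x)_*\mathcal{F}(k(x))\to\bigoplus_{x\in X^{(1)}}(i_x)_*\mathcal{F}_{-1}(k(x))\to\cdots,
\end{equation*}
where exactness in each codimension reduces to the excision of Step 1 applied to the semi-local regular scheme of a codimension-$c$ point. Every term is a sum of skyscraper sheaves of function-field values which are visibly homotopy invariant as presheaves; and pulling back along $\affl\times X\to X$ identifies the analogous resolution on $\affl\times X$ term-by-term with the one on $X$ after applying $\mathcal{F}$. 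Hence $h^i_{nis}(\mathcal{F}_{nis})$, computed by the resolution as Zariski hypercohomology on each smooth scheme, inherits homotopy invariance.

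\emph{Main obstacle.} The hardest step is the construction and the exactness of the Gersten-type resolution in the $\GW$-/Witt-setting. Unlike for ordinary transfers, one must track compatibility of transfer maps from punctured local neighbourhoods with the duality functor $D_X$, so that the quadratic structure descends to $\mathcal{F}_{-1}(k(x))$ in a well-defined fashion. This is exactly where $\chark k\ne 2$ and perfectness of $k$ enter essentially, and it is the content of the excision results of \cite{AD_StrHomInv} on which the cited statement ultimately rests.
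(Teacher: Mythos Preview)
The paper does not prove this theorem; it is stated as an external input, with the proof deferred to \cite{AD_WtrSh} and \cite{AD_StrHomInv} (as indicated already in the theorem header). So there is no argument in the present paper to compare your proposal against. That said, your outline is broadly the Voevodsky template that the cited references implement in the $\GW$/Witt setting, and your identification of the excision/Gersten step as the crux is accurate.

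There is, however, a genuine slip in Step~1. The identification $\mathcal F(U)\simeq\mathcal F_{nis}(U)$ for an essentially smooth local \emph{henselian} $U$ is automatic: every Nisnevich cover of a henselian local scheme admits a section, so the Nisnevich sheafification does nothing on such $U$. Neither Theorem~\ref{th:HGWCorinj} nor any splitting correspondence is needed for that. The non-trivial statement you actually need is the computation of $\mathcal F_{nis}(\affl\times U)$ for henselian local $U$, since $\affl\times U$ is \emph{not} henselian local. Concretely, $U\mapsto \mathcal F_{nis}(\affl\times U)$ is again a Nisnevich sheaf (base change of Nisnevich covers along $\affl\times-$), so homotopy invariance of $\mathcal F_{nis}$ reduces to checking that $\mathcal F(\affl\times U)\to\mathcal F_{nis}(\affl\times U)$ is an isomorphism for henselian local $U$; equivalently, that $H^0_{nis}(\affl\times U,\mathcal F_{nis})=\mathcal F(U)$. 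This is exactly the $i=0$ case of condition (SHI) in the paper, and it is \emph{not} obtained by ``applying homotopy invariance of $\mathcal F$ to the henselization of $\affl\times U$ at points of the zero section'': henselizing at those points loses global sections over $\affl\times U$. The actual proof in \cite{AD_StrHomInv} goes through excision isomorphisms for relative affine lines, not through a pointwise henselization argument. Your Step~2 (Gersten/Cousin resolution) is along the right lines, but note that its exactness already subsumes the content of Step~1, so the two steps are not as independent as your outline suggests.
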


Finally we prove the equality \eqref{eq:intr:NisTr} for GW- and Witt-correspondences, which is used in the next section.
Let's recall the following definition, \cite[def. 2.1]{GarPan-Kmot14}:

\begin{definition} \label{def:Corvirt}
An additive category $\Cor$ with objects being smooth schemes and equipped with a functor $Sm_k\to \Cor$, which is identity on objects, is called a \emph{ringoid}. 
\end{definition}
Let's rewrite equation \eqref{eq:intr:NisTr} in the following abstract form
\begin{equation}\label{eq:CorvNisLoc} \Cor(U,X)=\bigoplus_{x\in X} \Cor(U,X^h_x)\end{equation}
for a ringoid $\Cor$, essentially smooth local henselian $U$, and smooth $X$.

\begin{definition}\label{def:Fin}
Given a morphism of schemes $p\colon Y\to X$, let's denote 
by $Fin(p)$ 
a filtering ordered set of closed subschemes in $Y$ that are finite over $X$.

For any two schemes $X,Y$ over some base scheme $S$,
let $Fin_S(X,Y)=Fin(Y\times_S X\to X)$, $Fin_S(X)=Fin(X\to S)$. 

For any closed subscheme $Z^\prime\subset X$ and a local scheme $U$ denote $Fin^Z(U,X)$ the set of closed subschemes 
$Z\in Fin_S(X)$ 
such that
$red(Z\times_U u)\subset Z^\prime$.

\end{definition}\begin{lemma}\label{lm:ProjCfinlim}
For any morphism of schemes $Y\to X$
\begin{equation*}\label{eq:WCorfininjlim}
Coh_{fin}(Y\to X) \simeq \varinjlim\limits_{Z\in Fin(Y\to X)} Coh(Z),\quad
\mathcal P(Y\to X)\simeq \varinjlim\limits_{Z\in Fin(Y\to X)}\mathcal P(Z\to X)
,\end{equation*}
and the equivalences are compatible with the duality functors $D_X$, so the second equivalence is equivalence of categories with duality.
\end{lemma}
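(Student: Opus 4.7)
The equivalence will be realised via pushforward along the closed immersions $i_Z\colon Z\hookrightarrow Y$ for $Z\in Fin(Y\to X)$; essential surjectivity comes from exhibiting every $\mathcal F\in Coh_{fin}(Y\to X)$ as pushed forward from its scheme-theoretic support $V(\mathrm{Ann}_{\mathcal O_Y}\mathcal F)$, and the main technical input is that this scheme-theoretic support is finite over $X$.

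\textbf{Construction and essential surjectivity.} For each $Z\in Fin(Y\to X)$ the closed immersion $i_Z$ induces a fully faithful exact pushforward $i_{Z,*}\colon Coh(Z)\to Coh(Y)$ whose image lies in $Coh_{fin}(Y\to X)$ because $\Supp i_{Z,*}\mathcal G\subseteq Z$ is finite over $X$. The poset $Fin(Y\to X)$ is filtered under scheme-theoretic unions $Z_1\cup Z_2=V(\mathcal I_{Z_1}\cap\mathcal I_{Z_2})$ (which is again finite over $X$ since its reduction is), so these functors assemble into $\Phi\colon\varinjlim_Z Coh(Z)\to Coh_{fin}(Y\to X)$. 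Given $\mathcal F\in Coh_{fin}(Y\to X)$ set $Z_{\mathcal F}:=V(\mathrm{Ann}_{\mathcal O_Y}\mathcal F)$; then $\mathcal F$ carries a canonical $\mathcal O_{Z_{\mathcal F}}$-module structure and $\mathcal F=i_{Z_{\mathcal F},*}(i_{Z_{\mathcal F}}^{*}\mathcal F)$, so essential surjectivity reduces to showing $Z_{\mathcal F}\in Fin(Y\to X)$.

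\textbf{The main technical step.} The underlying topological space of $Z_{\mathcal F}$ coincides with $\Supp\mathcal F$, hence by hypothesis the reduction $(Z_{\mathcal F})_{red}\to X$ is finite. Since $Y$ is noetherian, the nilradical $\mathcal N\subset\mathcal O_{Z_{\mathcal F}}$ satisfies $\mathcal N^n=0$ for some $n$, and the finite filtration $0=\mathcal N^n\subset\cdots\subset\mathcal N\subset\mathcal O_{Z_{\mathcal F}}$ has successive quotients coherent over $\mathcal O_{(Z_{\mathcal F})_{red}}$. A cohomological-criterion argument (any quasicoherent sheaf on $Z_{\mathcal F}$ is a successive extension of pushforwards from $(Z_{\mathcal F})_{red}$, so higher cohomology vanishes) promotes affineness of $(Z_{\mathcal F})_{red}\to X$ to affineness of $Z_{\mathcal F}\to X$, while the same filtration makes the pushforward $\mathcal O_X$-module finitely generated. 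Hence $Z_{\mathcal F}\to X$ is finite. I expect this verification to be the main delicate point of the proof.

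\textbf{Full faithfulness, restriction to $\mathcal P$, and duality.} For $\mathcal F,\mathcal F'\in Coh_{fin}(Y\to X)$, any $Z\in Fin(Y\to X)$ containing $Z_{\mathcal F}$ and $Z_{\mathcal F'}$ presents both as pushforwards from $Z$, and the closed-immersion identity $i_Z^{*}i_{Z,*}=\id$ gives $\Hom_Y(i_{Z,*}\mathcal G,i_{Z,*}\mathcal G')\simeq\Hom_Z(\mathcal G,\mathcal G')$, which matches the colimit hom-set, so $\Phi$ is fully faithful. For the $\mathcal P$-version, note that for $\mathcal F=i_{Z,*}\mathcal G$ one has $p_{*}\mathcal F=(p|_Z)_{*}\mathcal G$, so local freeness over $X$ transfers between the two categories and $\Phi$ restricts to the claimed equivalence $\mathcal P(Y\to X)\simeq\varinjlim_Z\mathcal P(Z\to X)$. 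Finally, the dualities $D_X$ on both sides are defined via $\mathcal Hom_{\mathcal O_X}(p_{*}-,\mathcal O_X)$ with its induced module structure; since $p_{*}\mathcal F=(p|_Z)_{*}\mathcal G$ under $\Phi$, the dualities intertwine naturally, producing the desired equivalence of categories with duality.
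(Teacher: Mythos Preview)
The paper states this lemma without proof, treating it as a routine verification; your write-up supplies a correct and natural argument for it.

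One comment on the ``main technical step'': your filtration argument for promoting finiteness of $(Z_{\mathcal F})_{red}\to X$ to finiteness of $Z_{\mathcal F}\to X$ is valid, but it can be shortened. Under the paper's standing hypotheses all schemes are separated noetherian and of finite type over a base $S$, so $Y\to X$ (and hence the closed subscheme $Z_{\mathcal F}\to X$) is automatically separated and of finite type. Since universal closedness and having finite fibres are topological properties, they pass from $(Z_{\mathcal F})_{red}\to X$ to $Z_{\mathcal F}\to X$; thus $Z_{\mathcal F}\to X$ is proper and quasi-finite, hence finite. This avoids invoking Serre's affineness criterion and the nilpotent filtration. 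Either way the conclusion is the same, and the remaining points (full faithfulness via $i_Z^{*}i_{Z,*}\simeq\id$, the identification $p_{*}i_{Z,*}=(p|_Z)_{*}$ for the $\mathcal P$-statement, and compatibility with $D_X$) are handled correctly.
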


\begin{lemma}\label{lm:locHendProjCsplit}
For any scheme $Z$ finite over a local henselian scheme $U$
we have
$$\mathcal P^S(Z\to U) = \prod\limits_{z\in Z}\mathcal P^S(Z^h_z\to U),$$
and this decomposition is compatible with the action of endofunctors $D_U$.
\end{lemma}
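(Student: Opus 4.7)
The plan is to reduce to a statement about modules over a semi-local ring and then invoke the Hensel decomposition. First, because the claim is local on $U$ I can pass to the affine situation: write $U = \mathrm{Spec}\,A$ for $A$ a henselian local ring and, since $Z \to U$ is finite, $Z = \mathrm{Spec}\,B$ for a finite $A$-algebra $B$. Then $B$ is a semi-local ring whose maximal ideals correspond to the closed points $z$ of $Z$ (equivalently, the points of $Z$ lying over the closed point of $U$), and the Hensel property of $A$ forces $B$ to split as a finite product $B = \prod_z B_z$ of henselian local rings, where $B_z = \mathcal O_{Z^h_z}$. In particular $Z = \coprod_z Z^h_z$ as schemes, and the henselization notation is justified because each $B_z$ is already henselian.

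Given this coproduct decomposition, a coherent sheaf $\mathcal F$ on $Z$ is the same data as a tuple $(\mathcal F_z)_{z}$ with $\mathcal F_z$ coherent on $Z^h_z$, via the orthogonal idempotents in $B$. Since $\Supp \mathcal F = \coprod_z \Supp \mathcal F_z$ and each $Z^h_z$ maps finitely to $U$, finiteness of the support over $U$ is automatic for every summand, so $Coh_{fin}(Z \to U) = \prod_z Coh_{fin}(Z^h_z \to U)$. To promote this to the subcategories $\mathcal P(-\to U)$, I use that the pushforward $p_*$ to the local scheme $U$ respects the decomposition: if $p_z\colon Z^h_z \to U$ denotes the restriction, then $p_*\mathcal F = \bigoplus_z (p_z)_*\mathcal F_z$, so $p_*\mathcal F$ is a finitely generated $A$-module that is a direct sum of the $(p_z)_*\mathcal F_z$. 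Over a local ring, a finitely generated module is projective iff it is free iff every direct summand is projective; hence $p_*\mathcal F$ is locally free on $U$ iff each $(p_z)_*\mathcal F_z$ is locally free. This gives the claimed product decomposition $\mathcal P^S(Z\to U) = \prod_z \mathcal P^S(Z^h_z\to U)$.

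For compatibility with the duality, recall that on the module side $D_U$ is given by $M \mapsto \Hom_A(M,A)$ with its residual $B$-action. This functor is $A$-linear and in particular commutes with the action of the idempotents of $B$, so it preserves the decomposition $\mathcal F = \bigoplus_z \mathcal F_z$ coming from the splitting $B = \prod_z B_z$. Concretely, $D_U(\bigoplus_z \mathcal F_z) = \bigoplus_z D_U(\mathcal F_z)$, and the summand $D_U(\mathcal F_z)$ lives in $\mathcal P^S(Z^h_z\to U)$ because its $B_{z'}$-action is zero for $z'\neq z$. Together with the previous paragraph this shows that the equivalence of categories is compatible with $D_U$, and hence is an equivalence of categories with duality, completing the proof.

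The main obstacle is essentially bookkeeping: one must be careful that the henselization $Z^h_z$ really agrees with the direct factor $\mathrm{Spec}\,B_z$ of $Z$ (which uses that a finite algebra over a henselian local ring is itself a product of henselian locals), and that freeness over $A$ passes through the decomposition in both directions. Both are standard once the Hensel splitting is in place, so no serious difficulty remains.
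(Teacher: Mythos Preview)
Your proof is correct and follows essentially the same approach as the paper's one-line argument: both rest on the standard fact that a finite scheme over a henselian local scheme decomposes as the disjoint union of its local (henselian) components, from which the product decomposition of $\mathcal P(Z\to U)$ and its compatibility with $D_U$ are immediate. You have simply spelled out the module-theoretic details that the paper leaves implicit.
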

\begin{proof}
The claim follows from lemma \ref{lm:ProjCfinlim} and from that any finite scheme over local henselian scheme splits into disjoint union of local henselian schemes.
\end{proof}

\begin{proposition} \label{prop:GWWittNisloc}
The categories $GWCor_S$ and $WCor_S$ satisfy condition \eqref{eq:CorvNisLoc} for any base scheme $S$, and a local henselian $U$, and a scheme $X$ over $S$.
\end{proposition}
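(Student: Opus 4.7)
The plan is to combine Lemma~\ref{lm:ProjCfinlim} and Lemma~\ref{lm:locHendProjCsplit} to decompose $\mathcal P^S(U,X)$ in a $D_U$-compatible way, then apply the Grothendieck--Witt (resp.\ Witt) group functor and reorganise the resulting colimit by image point in $X$. By Lemma~\ref{lm:ProjCfinlim},
\[ \mathcal P^S(U,X) = \varinjlim_{Z\in Fin_S(U,X)} \mathcal P(Z\to U) \]
as exact categories with duality $D_U$. For each such $Z$, since $U$ is local henselian and $Z\to U$ is finite, Lemma~\ref{lm:locHendProjCsplit} decomposes
\[ \mathcal P(Z\to U) = \prod_{z\in Z}\mathcal P(Z^h_z\to U) \]
compatibly with $D_U$; the product is finite because $Z\to U$ is finite.

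Applying the Grothendieck--Witt functor, which commutes with filtered colimits of categories with duality and sends finite products to direct sums of abelian groups, yields
\[ GWCor_S(U,X) = \varinjlim_{Z}\, \bigoplus_{z\in Z} GW(\mathcal P(Z^h_z\to U),D_U). \]
Each closed point $z\in Z$ has a well-defined image $x=pr_X(z)\in X$, and coproducts commute with filtered colimits, so regrouping by $x$ gives
\[ GWCor_S(U,X) = \bigoplus_{x\in X}\; \varinjlim_{\substack{(Z,z)\\ pr_X(z)=x}} GW(\mathcal P(Z^h_z\to U),D_U). \]
It thus suffices to identify the $x$-summand with $GWCor_S(U,X^h_x)$.

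By the pro-object convention for correspondences into henselisations, $GWCor_S(U,X^h_x)=\varinjlim_V GWCor_S(U,V)$ over étale neighbourhoods $(V,v)\to(X,x)$. Applying Lemmas~\ref{lm:ProjCfinlim} and~\ref{lm:locHendProjCsplit} to each $V$ rewrites this as a colimit over triples $(V,Z',z')$, with $Z'\in Fin_S(U,V)$ and $z'\in Z'$ a closed point above $v$ (ensured after shrinking $V$), of $GW(\mathcal P((Z')^h_{z'}\to U),D_U)$. The two indexing systems $\{(V,Z',z'):z'\mapsto v\mapsto x\}$ and $\{(Z,z):pr_X(z)=x\}$ are cofinal in one another: given $(V,Z',z')$, the pushforward of $Z'$ along $V\to X$ together with the image of $z'$ yields a pair $(Z,z)$ with $(Z')^h_{z'}\simeq Z^h_z$ because $V\to X$ is étale at $v$; conversely, for any $(Z,z)$ the henselian local scheme $Z^h_z$ factors through $U\times_S X^h_x$ by the universal property of henselisation, producing a compatible triple $(V,Z',z')$. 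The argument for $WCor_S$ is identical with the Witt group $W$ in place of $GW$. The main obstacle is this cofinality check; once it is established, the remainder is bookkeeping with filtered colimits and finite direct sums.
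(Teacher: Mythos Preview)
Your argument is correct and follows the same overall strategy as the paper: combine Lemmas~\ref{lm:ProjCfinlim} and~\ref{lm:locHendProjCsplit}, then reorganise the resulting filtered colimit by the image point $x\in X$. The one substantive difference is in how the $x$-summand is identified with $\mathcal P(U,X^h_x)$ (resp.\ its $GW$-group). The paper stays entirely at the level of the categories $\mathcal P$ and uses the indexing set $Fin^x(U,X)$ of Definition~\ref{def:Fin}---closed subschemes $Z\subset U\times_S X$ finite over $U$ whose reduced closed fibre lies in $\{x\}$---observing that any such $Z$ is itself local henselian and hence factors through $U\times_S X^h_x$, so that $\varinjlim_{Z\in Fin^x(U,X)}\mathcal P(Z\to U)=\mathcal P(U,X^h_x)$ directly; only after this does it apply $GW$. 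You instead apply $GW$ first and then invoke the pro-object description $GWCor_S(U,X^h_x)=\varinjlim_V GWCor_S(U,V)$ over \'etale neighbourhoods, which forces the extra cofinality check between the two indexing systems. Both work, but the paper's route avoids the detour through \'etale neighbourhoods and keeps the argument uniformly at the category-with-duality level, which is slightly cleaner.
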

\begin{proof}
Indeed, the equality holds at the level of the categories $(\mathcal P(U,X),\mathcal D_X)$.
Let $u$ be the closed point of $U$,
then using equivalences from lemmas \ref{lm:ProjCfinlim}, \ref{lm:locHendProjCsplit} and notation above (def . \ref{def:Fin}) 
we get
\begin{multline*}
\mathcal P(U,X) =
\varinjlim\limits_{Z\in Fin(U,X)} \mathcal P(Z\to U)=
\varinjlim\limits_{Z\in Fin(U,X)} \prod\limits_{z\in Z}\mathcal P(Z^h_z\to U)=\\
\varinjlim\limits_{Z^\prime\in Fin(X)}
\varinjlim\limits_{Z\in Fin^{Z^\prime}(U,X)} 
\prod\limits_{z\in Z}\mathcal P(Z^h_z\to U)=\\
\varinjlim\limits_{Z^\prime\in Fin(X)}
\prod\limits_{x\in Z^\prime}
\varinjlim\limits_{Z\in Fin^x(U,X)} 
\mathcal P(Z\to U)=
\varinjlim_{Z^\prime\in Fin(X) }
\prod\limits_{x\in Z^\prime}\mathcal P(U,X^h_x)
,\end{multline*}
and consequently 
\begin{multline*}
GWCor(U,X)=
GW(\mathcal P(U,X))=
\varinjlim_{Z^\prime\in Fin(X)}
\prod\limits_{x\in Z^\prime}GW(\mathcal P(U,X^h_x))=\\
\bigoplus\limits_{x\in X}GW(\mathcal P(U,X^h_x))=
\bigoplus\limits_{x\in X}GWCor(U,X^h_x)
.\end{multline*}
and similarly for $WCor_k$.
\end{proof}
Further in the text we use the following notation
\begin{definition}
Let's denote the representable presheaves $\bZ_{GW}(X)=GWCor(-,X)$, and $\bZ_{W}(X)=WCor(-,X)$,
and denote by $\bZ_{GW,nis}(-)$, and $\bZ_{W,nis}(-)$
the associated Nisnevich sheaves.
\end{definition}

\section{Transfers for Nisnevich cohomolosies. }\label{sect:GWNiscoh}

In this section we prove the commutativity of the first square in diagram \eqref{diag:Constr}, 
and deduce that 
the category $\ShNGW$ 
of sheaves with GW-transfers\footnote{similarly for Witt-transfers} 
is abelian, and there is a natural isomorphism
$Ext^i_{\ShNGW}(\bZ_{GW,nis}(X),F[i])=H^i_{nis}(X,F)$
for $X\in Sm_k$, $F\in \ShNGW$, and similarly for Witt-transfers.
The reason of such behaviour of GW-correspondences (Witt-correspondences) is 
Actually the same is true for any category of correspondences (ringoid) $\Cor$ satisfying
relation \eqref{eq:CorvNisLoc}. 

Given a ringoid $\Cor$,
let $Sh_{tr}=\ShNTrs=\ShNTr$ denote the category of sheaves with $\Cor$-transfers, i.e. additive functors $\Cor\to Ab$ such that the restriction to a functor $Sm_k\to Ab$ is a sheaf. In the rest of the section if we write the words 'correspondences' and 'transfers' with out a prefix we mean the $\Cor$-correspondences and $\Cor$-transfers. 

\begin{theorem}
Suppose $\Cor$ is a ringoid
such that equation \eqref{eq:CorvNisLoc} holds.
Then $\ShNTrs$ is abelian
and 
for any presheaf with $\Cor$-transfers the associated Nisnevich sheaf and the cohomology presheaves are equipped with $\Cor$-transfers in a canonical way,
and moreover,
there is a natural isomorphism
$$Ext^i_{\ShNTr}(\bZ_{tr,nis}(X),F) = H^i_{nis}(X,F) ,$$
for any sheaf with transfers $F$, where $\bZ_{tr,nis}(X)$ denotes the Nisnevich sheafification of the representable presheaf $\bZ_{tr}(X)=\Cor(-,X)$.
\end{theorem}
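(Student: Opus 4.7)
The overall strategy is the one sketched around diagram \eqref{diag:Constr}: construct the left adjoint $L_{tr}$ to the forgetful functor $\PreTr \to Pre$ (formally, a left Kan extension along $Sm_k \to \Cor$, identified on representables with $\bZ(X) \mapsto \bZ_{tr}(X)$), derive it to $\mathcal{L}_{tr}\colon \Dba(Pre) \to \DbaPreTr$, and show it descends through Nisnevich localization. All three claims will then fall out of the resulting adjunction $\mathcal{L}_{tr}^{nis} \dashv F_{tr}^{nis}$ between Nisnevich-localized derived categories. The only scheme-theoretic input needed beyond formal adjunction yoga is (NL).

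\textbf{Step 1: a Nisnevich localization that always makes sense.} Because at this stage we do not yet know that $\ShNTr$ is abelian, I would first form the Verdier/Bousfield localization $\DbaNisTr$ of $\DbaPreTr$ at those complexes whose stalks (as presheaves) vanish (cf.\ def.\ \ref{def:Dnisloc}); the analogous construction on the presheaf side returns $\Dba(\ShN)$. The first target is the \emph{modified} Ext formula
\[
Hom_{\DbaNisTr}(\bZ_{tr}(X), F[i]) \;\cong\; H^i_{nis}(X, F)
\]
for any $F \in \PreTr$; the right-hand side is well-defined because $F$ has an underlying presheaf.

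\textbf{Step 2: (NL) gives Nisnevich exactness of $\mathcal{L}_{tr}$.} The crux is that $\mathcal{L}_{tr}$ sends Nisnevich-quasi-isomorphisms of complexes of presheaves to Nisnevich-quasi-isomorphisms of complexes of presheaves with transfers, hence descends to $\mathcal{L}_{tr}^{nis}\colon \Dba(\ShN) \to \DbaNisTr$. It suffices to check this on generators, i.e.\ on the Čech complex $C(\mathcal U/X) \to \bZ(X)$ of a Nisnevich cover $\mathcal U \to X$. Evaluating the image $\mathcal{L}_{tr}C(\mathcal U/X)$ on a local henselian $U$ gives
\[
\cdots \to \Cor(U, \mathcal U \times_X \mathcal U) \to \Cor(U, \mathcal U) \to \Cor(U, X) \to 0,
\]
and (NL) decomposes this as a direct sum, indexed by $x \in X$, of the corresponding Čech complexes for the induced Nisnevich cover of $X^h_x$. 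Each such cover admits a section by henselianity, so each summand is contractible. Together with the obvious Nisnevich exactness of the right adjoint $F_{tr}$, this yields the desired adjunction $\mathcal{L}_{tr}^{nis} \dashv F_{tr}^{nis}$, and Step 1 follows immediately by adjunction:
\[
Hom_{\DbaNisTr}(\bZ_{tr}(X), F[i]) = Hom_{\Dba(\ShN)}(\bZ(X), F_{tr}F[i]) = H^i_{nis}(X, F).
\]

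\textbf{Step 3: extracting the three conclusions simultaneously.} Applied to an ordinary presheaf with transfers $F$ (as a complex in degree zero), the $i = 0$ case of the modified formula exhibits $U \mapsto Hom_{\DbaNisTr}(\bZ_{tr}(U), F)$ as a Nisnevich sheaf on $Sm_k$ equipped with canonical transfers, and whose underlying presheaf coincides with the usual sheafification $F_{nis}$. This constructs the canonical transfer structure on $F_{nis}$ (and, by iteration on an injective resolution of $F$, on every $h^i_{nis}(F)$). Having a sheafification functor $\PreTr \to \ShNTr$ left adjoint to the inclusion $\ShNTr \hookrightarrow \PreTr$ forces $\ShNTr$ to be abelian with presheaf-wise kernels and sheafified cokernels. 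Once $\ShNTr$ is known abelian, the standard comparison $\Dba(\ShNTr) \to \DbaNisTr$ is an equivalence, and the modified formula transports to the claimed $Ext^i_{\ShNTr}(\bZtrNis(X), F) = H^i_{nis}(X, F)$.

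\textbf{Main obstacle.} The subtle point is the bootstrap: one would like to say ``$\ShNTr$ is abelian because sheafification with transfers exists'', but that sheafification is precisely what must be produced. The resolution is to do everything in $\DbaNisTr$, whose definition requires no prior abelianness, and to prove the three statements in one pass using the single adjunction obtained in Step 2. Everything else is adjunction bookkeeping; the only substantive geometric input is that Nisnevich covers of henselian locals split, which is what marries (NL) to the Čech calculation.
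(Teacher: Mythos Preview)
Your overall plan is exactly the paper's: work in the localisation $\DbaNisTr$ (which needs no abelianness of $\ShNTr$ to define), establish the Hom formula there via an induced adjunction $\mathcal L_{tr}^{nis}\dashv F_{tr}^{nis}$, then read off transfers on sheafification, abelianness of $\ShNTr$, and finally the genuine $Ext$ formula after identifying $\DbaNisTr\simeq\DbaShNTr$. The bootstrap you flag as the main obstacle is resolved in the paper precisely as you describe.

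The one genuine gap is in Step~2. Your claim that (NL) reorganises $\Cor(U,\mathcal U^\bullet)\to\Cor(U,X)$ into $\bigoplus_{x\in X}$ of the \v{C}ech complexes for $\mathcal U\times_X X^h_x\to X^h_x$ is false. Regrouping $\Cor(U,\mathcal U^n)=\bigoplus_{y\in\mathcal U^n}\Cor(U,(\mathcal U^n)^h_y)$ by the image of $y$ in $X$ does \emph{not} give $\Cor(U,\mathcal U^n\times_X X^h_x)$: the scheme $\mathcal U^n\times_X X^h_x$ has points lying over non-closed points of $X^h_x$, and these correspond to points of $\mathcal U^n$ mapping to points of $X$ \emph{other} than $x$, so they land in different summands of your regrouping and involve the wrong henselizations. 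For instance with $X=\affl$, $\mathcal U=\affl\amalg(\affl\setminus 0)$, $x=0$, the only point of $\mathcal U$ over $0$ is $0$ in the first copy, so your $x=0$ piece in degree one is just $\Cor(U,X^h_0)$; but $\mathcal U\times_X X^h_0=X^h_0\amalg(X^h_0\setminus 0)$, which is strictly larger. The paper avoids any \v{C}ech computation: it introduces restriction functors $loc$, $loc_{tr}$ to presheaves on the category $\HLoc$ of essentially smooth henselian local schemes, notes that Nisnevich-acyclic is exactly $\ker(loc)$ (resp.\ $\ker(loc_{tr})$), and observes that (NL) is literally the statement $loc_{tr}\circ L_{tr}\simeq L_{tr}^{loc}\circ loc$ on representables, both sides sending $\bZ(X)$ to $\bigoplus_x\bZ_{tr}(X^h_x)$. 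Since $loc$ sends representables to projectives this passes to derived functors, and Nisnevich exactness of $\mathcal L_{tr}$ follows at once---without assuming that \v{C}ech complexes generate the acyclic subcategory, a fact the paper only \emph{derives} later as a corollary of the adjunction.
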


\begin{definition}\label{def:cat-denot}
Denote by $Pre$ the category of presheaves of abelian groups on $Sm_k$,
and denote by $\ShN=\ShNis$ the category of Nisnevich sheaves on $Sm_k$.
Similarly, we denote by $\PreTrs=\PreTr$ the category of presheaves with $\Cor$-transfers, 
i.e. additive functors $Cor_{virt}\to Ab$, 
and as noted above we denote by $\ShNTrs=\ShNTr$ the full subcategory of $\PreTrs$ 
spanned by such presheaves that are Nisnevich sheaves.
\end{definition}
\begin{definition}\label{def:Dnisloc}
The morphism $w\colon A^\bullet \to B^\bullet$
  in the derived category $\Dba(Pre)$ or $\DbaPreTr$
is called \emph{Nisnevich quasi-isomorphism} or \emph{sheaf quasi-isomorphism} iff 
the homomorphisms
  $$\underline h^i(w)\colon 
    \underline h^i(A^\bullet) \to \underline h^i(B^\bullet), i\in \mathbb Z,$$
  of the Nisnevich sheaves associated with cohomology presheaves ($h^i(A^\bullet)$ and $h^i(B^\bullet)$) are isomorphisms.
A complex $A^\bullet$ 
  in the category $\Dba(Pre)$ or $\DbaPreTr$ 
is called \emph{sheaf acyclic} iff
all Nisnevich sheaves $\underline h^i(A^\bullet)$ 
are zero.

Denote as $\DbaNisTr$ the localisation of the category $\DbaPreTr$ in respect to Nisnevich quasi-isomorphisms.
\end{definition}
\begin{remark}
Let's note also that
we consider here bounded above derived categories, though the reasoning works as well for unbounded derived categories.
\end{remark} 

Consider the adjunction $\LtrPre\colon Pre \dashv \PreTrs \colon \FtrPre$, where 
$\LtrPre$ is the left Kan extension and $\FtrPre$ is forgetful.
Then by definition $\FtrPre$ is exact, and the induced functor on the derived categories preserves Nisnevich quasi-isomorphisms. 
We {\em claim} that the left derived functor $\LtrDPre=\mathcal L L_{tr}\colon \Dba(Pre)\to \DbaPreTr$ preserves Nisnevich quasi-isomorphisms. 

To prove the claim we use the description of the Nisnevich acyclic complexes via the category $\HLoc$ 
of essential smooth local henselian schemes
and the category $\CorHLocs=\Cor(\HLoc)$ of correspondences between such schemes. 
%
Consider the diagram
\begin{equation}\label{eq:LocDiag}
\xymatrix{
Pre\ar[d]^{\loc}\ar[r]^{\LtrPre} & \PreTr\ar[d]^(0.4){\loctr}\\
\PreHLoc\ar[r]^{\LtrLoc} & \PreHLocTrs 
}
\xymatrix{
\DbaPre\ar[d]_{\locD}\ar[r]^{\LtrDPre} & \DbaPreTr\ar[d]^{\locDtr}\\
\DbaPreHLoc\ar[r]^{\LtrLocD} & \DbaPreHLocTrs
,}\end{equation}
where 
$\PreHLocs=\PreHLoc$  and $\PreHLocTrs=\PreHLocTr$ are 
categories of presheaves on $\HLoc$ and $\CorHLoc$,
$\loc$ and $loc_{tr}$ are the restriction functors, which are exact,
and $\LtrLocD$ and $\LtrDPre$ are left derived of $\LtrLoc$ and $\LtrDPre$.

The subcategories of Nisnevich acyclic objects in $\DbaPre$ and $\DbaPreTr$ are exactly the kernels of the functors $\loc$ and $\loctr$. 
So the claim follows from the commutativity of the diagrams above. In fact, the commutativity of the diagrams above 
is exactly what equation \eqref{eq:CorvNisLoc} states.
We realise this strategy with more details in the following sequence of lemmas.

\begin{lemma}\label{lm:WCorsplitting}
Suppose $Cor_{virt}$ is a ringoid such that equation \eqref{eq:CorvNisLoc} holds;
then for any $X\in Sm_k$,
$$
loc(\bZ(X))=\bigoplus\limits_{x\in X}\bZ(X^h_x),\quad
loc_{tr}(\bZ_{tr}(X))=\bigoplus\limits_{x\in X}\bZ_{tr}(X^h_x)
,$$
where $\bZ_{tr}(X)=Cor_{virt}(-,X)$.
\end{lemma}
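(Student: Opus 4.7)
The plan is to prove both equalities pointwise, by evaluating the respective presheaves on $\HLoc$ at an arbitrary essentially smooth local henselian scheme $U$, and reducing each case to a decomposition statement about $\Hom$-sets (or $\Cor$-groups). Both sides are presheaves on $\HLoc$, and direct sums of presheaves are computed pointwise, so checking the identity on values at each $U \in \HLoc$ suffices.

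For the second (transferred) equality, evaluating at $U \in \HLoc$ we have on the left $loc_{tr}(\bZ_{tr}(X))(U) = \Cor(U,X)$, where $\Cor(U,X)$ is defined as the filtered colimit $\varinjlim_\alpha \Cor(U_\alpha, X)$ over smooth models $U = \varprojlim U_\alpha$. On the right, $\bigoplus_{x\in X} \bZ_{tr}(X^h_x)(U) = \bigoplus_{x\in X} \Cor(U, X^h_x)$. The equality between the two is then exactly the assumption \eqref{eq:CorvNisLoc}, so this case is immediate.

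For the first equality, evaluating at $U$ gives on the left the free abelian group $\bZ[\Hom_{Sm_k}(U,X)]$, and on the right $\bigoplus_{x\in X} \bZ[\Hom(U, X^h_x)]$. The key point is the universal property of the henselization: since $U$ is local henselian, any morphism $f\colon U \to X$ factors uniquely through $X^h_{f(u)}$, where $u$ is the closed point of $U$ and $f(u)$ its image in $X$. This induces a bijection of sets
\[
\Hom_{Sm_k}(U, X) \;=\; \coprod_{x\in X} \Hom(U, X^h_x),
\]
where for each $f$ the summand $x = f(u)$ is the unique one receiving it; passing to free abelian groups yields exactly the claimed direct-sum decomposition.

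The main (mild) obstacle is bookkeeping: one must check that the filtered colimits used to define $\Hom(U,-)$ and $\Cor(U,-)$ on pro-objects $U \in \HLoc$ commute with the decomposition over $x \in X$. For the non-transferred case this is standard since a morphism from a local scheme factors through a connected component, and for the transferred case the work has already been packaged into \eqref{eq:CorvNisLoc} (which in the GW/Witt setting was verified in Proposition \ref{prop:GWWittNisloc}). Thus no substantive new work is required beyond the hypothesis.
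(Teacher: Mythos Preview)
Your proposal is correct and follows essentially the same approach as the paper: the second equality is exactly assumption \eqref{eq:CorvNisLoc}, and the first follows from the universal property of the henselization (any morphism from a local henselian $U$ to $X$ lifts uniquely to some $X^h_x$). The paper's proof is a two-line version of what you wrote; your additional bookkeeping about pointwise evaluation and filtered colimits is reasonable but not strictly needed given how \eqref{eq:CorvNisLoc} is stated.
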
\begin{proof}
The first equality follows from that for any morphism $U\to X$, where $U$ is local henselian, there is a lift to a morphism $U\to X^h_x$.
The second one is just a reformulation of equality \eqref{eq:CorvNisLoc}.
\end{proof}
\begin{lemma}\label{lm:WCorNiscommutsq}
Suppose $Cor_{virt}$ is a ringoid such that equation \eqref{eq:CorvNisLoc} holds;
then the squares \eqref{eq:LocDiag} are commutative.
\end{lemma}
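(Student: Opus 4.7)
The plan is to reduce the commutativity of both squares to a calculation on representable presheaves, where Lemma \ref{lm:WCorsplitting} applies directly, and then bootstrap to arbitrary bounded-above complexes via resolutions by direct sums of representables.

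First I verify the left (underived) square. All four functors preserve colimits: $\LtrPre$ and $\LtrLoc$ are left adjoints to the respective forgetful functors, while $\loc$ and $\loctr$ are restriction functors along the inclusions $\HLoc\hookrightarrow Sm_k$ and $\CorHLocs\hookrightarrow\Cor$, and hence admit both left and right Kan extensions as adjoints. Since every presheaf is a colimit of representables, it suffices to check the identity $\loctr\circ\LtrPre=\LtrLoc\circ\loc$ on $\bZ(X)$ for $X\in Sm_k$. Lemma \ref{lm:WCorsplitting} gives
\[
\loctr\LtrPre(\bZ(X))=\loctr(\bZ_{tr}(X))=\bigoplus_{x\in X}\bZ_{tr}(X^h_x),
\]
and similarly $\loc(\bZ(X))=\bigoplus_{x\in X}\bZ(X^h_x)$, whence $\LtrLoc\loc(\bZ(X))=\bigoplus_{x\in X}\bZ_{tr}(X^h_x)$ because $\LtrLoc$ carries representables to representables and preserves direct sums. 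Naturality in $X$ is automatic.

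For the derived square, representables $\bZ(X)$ are projective in $Pre$ by the Yoneda lemma, so any $A^\bullet\in\DbaPre$ admits a quasi-isomorphism $P^\bullet\to A^\bullet$ from a bounded above complex of direct sums of representables. Then $\LtrDPre(A^\bullet)$ is represented by $\LtrPre(P^\bullet)$, and since $\locDtr$ is exact, $\locDtr\LtrDPre(A^\bullet)\simeq\loctr\LtrPre(P^\bullet)$. On the other side, $\loc$ is exact, so $\locD(A^\bullet)\simeq\loc(P^\bullet)$; by Lemma \ref{lm:WCorsplitting}, $\loc$ sends each representable $\bZ(X)$ to a direct sum of representables on $\HLoc$, which is projective in $\PreHLoc$. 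Hence $\loc(P^\bullet)$ is itself a projective resolution of $\locD(A^\bullet)$, giving $\LtrLocD\locD(A^\bullet)\simeq\LtrLoc\loc(P^\bullet)$, and the already established underived commutativity applied termwise to $P^\bullet$ identifies the two sides.

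The main subtlety is that $\LtrPre$ need not be exact, so the derived commutativity requires choosing a single resolution adapted both to $\LtrDPre$ and, after applying $\loc$, to $\LtrLocD$; the preservation of direct sums of representables under $\loc$ supplied by Lemma \ref{lm:WCorsplitting} is precisely what ensures that one resolution by direct sums of representables on $Sm_k$ descends to a resolution of the same type on $\HLoc$, making the passage from the underived to the derived square essentially formal.
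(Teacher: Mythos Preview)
Your argument is correct and follows essentially the same strategy as the paper: reduce to representables via Lemma \ref{lm:WCorsplitting}, extend by colimits, and then pass to the derived level using that $\loc$ sends representables to direct sums of representables (hence projectives). The only notable difference is that the paper constructs the comparison map $\nu\colon \LtrLoc\circ\loc\Rightarrow \loctr\circ\LtrPre$ explicitly as a mate, via the units and counits of the adjunctions $L_{tr}\dashv F_{tr}$ and $L_{tr}^{loc}\dashv F_{tr}^{loc}$ together with the obvious equality $\loc\circ F_{tr}=F_{tr}^{loc}\circ\loctr$, and then checks that $\nu$ is an isomorphism on $\bZ(X)$; you instead compute both sides directly and declare naturality ``automatic''. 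This is fine, but it would be cleaner to say explicitly that the identification on each $\bZ(X)$ is induced by the canonical morphisms $X^h_x\to X$, which makes naturality in $X$ transparent and matches the paper's $\nu$.
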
\begin{proof}

Since the compositions $Loc\to Sm_S\to \Cor$ and $Loc\to \Cor^{Loc}\to \Cor$ coincides,
it follows that $loc\circ F_{tr}= F_{tr}^{loc}\circ loc_{tr}$,
where $F_{tr}$ and $F_{tr}^{loc}$ are forgetful functors.
The unit and counit of the adjunctions $L_{tr} \dashv F_{tr}$, $L^{loc}_{tr} \dashv F^{loc}_{tr}$
gives sequence on natural transformations
$$loc_{tr}\circ L_{tr}\leftarrow L_{tr}^{loc}\circ F_{tr}^{loc}\circ loc_{tr}\circ L_{tr} \simeq L_{tr}^{loc}\circ loc\circ F_{tr}\circ L_{tr}\leftarrow L_{tr}^{loc}\circ loc.$$
Denote the composition by $\nu\colon loc\circ L_{tr}^{loc}\to loc_{tr}\circ L_{tr}$.
For a representable presheaves $\bZ(X)$ we have $$(L_{tr}^{loc}\circ loc)(\bZ(X))=\bigoplus\limits_{x\in X}( \bZtr(X^h_x)) ), loc_{tr}\circ L_{tr}(\bZ(X))=loc(\bZtr(X))$$ and the morphism
$$\nu(\mathbb Z(X))\colon \bigoplus\limits_{x\in X}( \bZtr(X^h_x)) )\to loc(\bZtr(X))$$ is equal to the morphism which is defined by compositions with morphisms $X^h_x\to X$.
By lemma \ref{lm:WCorsplitting} the last homomorphism is isomorphism, and so we get that $\nu$ is isomorphism on representable presheaves.
Now since any presheaf in $Pre$ is a colimit of representable presheaves, the claim follows.

To prove commutativity of the right square let's note that 
(1) representable presheaves are projective objects in categories of presheaves,
(2) for any complex of presheaves there is representable resolvent, 
(3) functors $loc$ 
takes representable presheaves to representable ones;
hence
by the theorem about the derived functor of a composition of functors we have 
$\LtrLocD\circ loc = \mathcal L(\LtrLocD\circ loc)$, $loc_{tr}\circ \LtrPre = \mathcal L(loc_{tr}\circ \LtrPre)$.
\end{proof}

\begin{theorem}\label{th:WtrNisEx}
Suppose $Cor_{virt}$ is a ringoid such that equation \eqref{eq:CorvNisLoc} holds;
then the functor
$\LtrDPre\colon \Dba(Pre)\to \DbaPreTr$
is exact in respect to Nisnevich quasi-isomorphisms (or equivalently, $\LtrDPre$ takes
Nisnevich acyclic complexes in $\Dba(Pre)$ to Nisnevich acyclic ones in $\DbaPreTr$). 
\end{theorem}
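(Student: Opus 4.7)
The plan is to deduce this theorem directly from the commutativity of the right square in \eqref{eq:LocDiag}, already established in Lemma \ref{lm:WCorNiscommutsq}; essentially all the work is in reformulating Nisnevich acyclicity as vanishing under the restriction functors $\locD$ and $\locDtr$.

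First, I would establish the following reformulation: a bounded above complex $A^\bullet \in \DbaPre$ is Nisnevich acyclic iff $\locD(A^\bullet) = 0$ in $\DbaPreHLoc$, and similarly for $\DbaPreTr$ and $\locDtr$. The key point is that for a bounded above complex the stalk of $\underline{h}^i(A^\bullet)$ at a point $x \in X$ is computed as $h^i(A^\bullet(X^h_x))$, since filtering colimits along étale neighbourhoods commute with cohomology in the bounded above setting. Hence Nisnevich acyclicity of $A^\bullet$ is equivalent to $A^\bullet(U)$ being an acyclic complex of abelian groups for every essentially smooth local henselian $U$, which is exactly the condition $\locD(A^\bullet) = 0$. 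The same reformulation carries over to the $\Cor$-transfers setting, because the cohomology presheaves of a complex of presheaves with transfers have the same underlying abelian group values as the underlying complex of presheaves.

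Given this, the theorem follows formally. For any Nisnevich acyclic $A^\bullet \in \DbaPre$ we have $\locD(A^\bullet) = 0$, and the commutativity of the right square gives
\[
\locDtr(\LtrDPre(A^\bullet)) \simeq \LtrLocD(\locD(A^\bullet)) = \LtrLocD(0) = 0,
\]
so $\LtrDPre(A^\bullet)$ is Nisnevich acyclic. The equivalent statement about Nisnevich quasi-isomorphisms follows because $\LtrDPre$ is triangulated: for any Nisnevich quasi-isomorphism $w$ the cone $\Cone(w)$ is Nisnevich acyclic, hence $\Cone(\LtrDPre(w)) \simeq \LtrDPre(\Cone(w))$ is Nisnevich acyclic, so $\LtrDPre(w)$ is itself a Nisnevich quasi-isomorphism.

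The substantive content has already been absorbed into Lemmas \ref{lm:WCorsplitting} and \ref{lm:WCorNiscommutsq}, and ultimately into the splitting relation \eqref{eq:CorvNisLoc}. The only mild subtlety in the present argument is the stalkwise description of Nisnevich acyclicity used in the first step; in the bounded above regime this is routine, but for the unbounded variant alluded to in the remark after Definition \ref{def:Dnisloc} one would need to replace naive stalks by a more careful hypercohomology argument, though the overall structure of the proof would be unchanged.
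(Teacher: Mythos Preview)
Your proposal is correct and follows essentially the same route as the paper: the paper's proof consists of the single observation that the Nisnevich acyclic complexes are precisely the preimages of the acyclic complexes under $\locD$ and $\locDtr$, and then invokes the commutativity of the right square in \eqref{eq:LocDiag} from Lemma~\ref{lm:WCorNiscommutsq}. Your write-up simply unpacks this with more detail (the stalkwise description, the cone argument for the quasi-isomorphism formulation), and your closing caveat about the unbounded case is harmless but in fact unnecessary, since the stalk-versus-cohomology identification uses only that filtered colimits of abelian groups are exact.
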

\begin{proof}

Subcategories of Nisnevich acyclic complexes in $ \Dba(Pre)$ and $\DbaPreTr$ are exactly preimages 
under the functor $\loc$ and $\loctr$ of the categories of 
acyclic complexes in $\PreHLoc$ and $\PreHLocTrs$.
So the claim follows form
the commutativity of the right diagram in \eqref{eq:LocDiag} proved by lemma \ref{lm:WCorNiscommutsq}.
\end{proof}

\begin{corollary}
Suppose $Cor_{virt}$ is a ringoid such that equation \eqref{eq:CorvNisLoc} holds;
then there is an adjunction
$$\LtrDShN \colon \Dba(\ShN) \leftrightharpoons \DbaNisTr\colon \FtrN$$
such that 
$\FtrN$ takes a complex of presheaves with $Cor_{virt}$-transfers to itself considered as a complex of presheaves of abelian groups,
and $\LtrDShN(\bZ(X))=\bZ_{tr}(X)$. 
\end{corollary}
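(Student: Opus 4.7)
The plan is to obtain the adjunction by descending the known adjunction $\LtrDPre \dashv \FtrPre$ between $\Dba(Pre)$ and $\DbaPreTr$ along the Nisnevich localisations on both sides. The essential input is Theorem \ref{th:WtrNisEx}, which guarantees that the left adjoint $\LtrDPre$ sends Nisnevich-acyclic complexes to Nisnevich-acyclic ones; the forgetful $\FtrPre$ automatically enjoys the same property, since Nisnevich quasi-isomorphism in $\DbaPreTr$ is tested on the underlying complex of presheaves of abelian groups, which is exactly what $\FtrPre$ does.

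The first step is to identify $\Dba(\ShN)$ with the Nisnevich localisation of $\Dba(Pre)$ in the sense of def. \ref{def:Dnisloc}. This is standard: the sheafification functor $\lnis\colon Pre\to \ShN$ is exact with fully faithful right adjoint, and the resulting adjunction descends to derived categories to produce an equivalence between $\Dba(\ShN)$ and the localisation of $\Dba(Pre)$ at Nisnevich quasi-isomorphisms; a complex of presheaves becomes zero in $\Dba(\ShN)$ exactly when its sheaves of cohomology vanish, which is the condition used in def. \ref{def:Dnisloc}.

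Next I would descend both functors along the respective Nisnevich localisations. By Theorem \ref{th:WtrNisEx}, the composite $\Dba(Pre) \xrightarrow{\LtrDPre} \DbaPreTr \to \DbaNisTr$ inverts Nisnevich quasi-isomorphisms, so the universal property of localisation together with the identification of the previous step produces the functor $\LtrDShN\colon \Dba(\ShN) \to \DbaNisTr$. For the right adjoint, exactness of $\FtrPre$ implies that it takes any Nisnevich quasi-isomorphism in $\DbaPreTr$ to one in $\Dba(Pre)$, so again by the universal property it factors through a functor $\FtrN\colon \DbaNisTr \to \Dba(\ShN)$. The unit and counit of the underlying adjunction $\LtrDPre\dashv \FtrPre$ are natural transformations of functors, and they survive the localisation of both categories; the triangle identities then pass through in the same manner, yielding the required adjunction $\LtrDShN \dashv \FtrN$.

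Finally, the formula $\LtrDShN(\bZ(X))=\bZ_{tr}(X)$ is essentially immediate: the representable presheaf $\bZ(X)$ is already a Nisnevich sheaf, so it lifts canonically to $\Dba(\ShN)$; moreover it is a projective object of $Pre$, so no derivation is required and $\LtrPre(\bZ(X))=\bZ_{tr}(X)$ already holds in $\DbaPreTr$, whence the same equality persists after passing to $\DbaNisTr$. The main obstacle is almost entirely bookkeeping: one must match the two descriptions of $\Dba(\ShN)$ and check that a derived adjunction descends cleanly to these localisations; everything substantial is already packaged inside Theorem \ref{th:WtrNisEx} and lemma \ref{lm:WCorNiscommutsq}.
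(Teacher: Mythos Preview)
Your proposal is correct and follows essentially the same route as the paper: start from the adjunction $\LtrDPre\dashv\FtrPre$ on $\Dba(Pre)\leftrightharpoons\DbaPreTr$, use Theorem~\ref{th:WtrNisEx} (and the trivial observation for $\FtrPre$) to see that both functors preserve Nisnevich acyclicity, and then descend the adjunction to the Nisnevich localisations. The paper packages the descent step into a citation of an external appendix lemma, whereas you spell out the universal-property argument and the passage of unit/counit; the content is the same.
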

\begin{proof}
Obviously we have adjunction $\LtrDPre\colon \Dba(Pre) \leftrightharpoons \DbaPreTr\colon \FtrPre$,
and by definition $\LtrDPre(\bZ(X))=\bZ_{tr}(X)$ and 
$\FtrPre$ takes a complex of presheaves with transfers to 
itself considered as a complex of presheaves of abelian groups.
%
Now 
since by def. \ref{def:Dnisloc} and theorem \ref{th:WtrNisEx}
the functors $\FtrDPre$ and $\LtrDPre$ preserve Nisnevich acyclicity,
the claim follows form (see \cite[Appendix, lemma 6.5]{AD_DMGWeff}).
%
\end{proof}
\begin{corollary}\label{lm:HomNisloc=CohNis}
Suppose $Cor_{virt}$ is a ringoid such that equation \eqref{eq:CorvNisLoc} holds;
then for any $X\in Sm_k$ and a presheaf with transfers $F$ there is a natural isomorphism
\begin{equation}\label{eq:TrNislocadjNisloc} Hom_{\DbaNisTr}(\bZ_{tr}(X),F[i])=H^i_{nis}(X,F) .\end{equation}
\end{corollary}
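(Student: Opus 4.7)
The plan is to deduce the isomorphism directly from the adjunction established in the preceding corollary, together with the standard representability of Nisnevich cohomology in the derived category of Nisnevich sheaves. Concretely, I would apply the adjunction
$$\LtrDShN \colon \Dba(\ShN) \leftrightharpoons \DbaNisTr \colon \FtrN$$
to the object $\bZ(X) \in \Dba(\ShN)$. Since $\LtrDShN(\bZ(X)) = \bZ_{tr}(X)$, the adjunction yields a natural identification
$$Hom_{\DbaNisTr}(\bZ_{tr}(X), F[i]) \;\cong\; Hom_{\Dba(\ShN)}(\bZ(X), \FtrN(F)[i]).$$

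Next, I would unpack what $\FtrN(F)$ is. By construction of $\FtrN$ (see the previous corollary) it sends a complex of presheaves with transfers, viewed as an object of $\DbaNisTr$, to its underlying complex of presheaves of abelian groups, viewed in $\Dba(\ShN)$. The passage from $\Dba(Pre)$ to $\Dba(\ShN)$ is by Nisnevich sheafification, so $\FtrN(F)$ is represented by the Nisnevich sheafification $F_{nis}$ of the underlying presheaf of $F$; hence the right-hand side of the isomorphism above is $Hom_{\Dba(\ShN)}(\bZ(X), F_{nis}[i])$, and the statement $H^i_{nis}(X,F)$ in the claim is to be read as Nisnevich (hyper)cohomology of this sheafification.

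The final step is the classical identification
$$Hom_{\Dba(\ShN)}(\bZ(X), \mathcal{G}[i]) \;=\; H^i_{nis}(X, \mathcal{G})$$
for $X \in Sm_k$ and a complex of Nisnevich sheaves $\mathcal{G}$. This is the standard formula expressing Nisnevich cohomology as derived Hom out of the free sheaf on a representable; it follows from the Yoneda description of global sections, $Hom_{\ShN}(\bZ(X), -) = \Gamma(X,-)$, together with the existence of injective resolutions in $\ShN$. Assembling the three identifications gives the required natural isomorphism.

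The only nontrivial verification is the middle step: making sure that $\FtrN$ of a presheaf with transfers coincides, as an object of $\Dba(\ShN)$, with the Nisnevich sheafification of its underlying presheaf. This is essentially a bookkeeping point about how $\DbaNisTr$ was defined as a Nisnevich localization of $\DbaPreTr$ and how the adjunction of the preceding corollary was constructed via the general lemma on localizations, so I expect it to be routine but worth writing out. All naturality claims in $X$ and $F$ are inherited from the naturality of the adjunction unit/counit and of the Yoneda identification in the last step.
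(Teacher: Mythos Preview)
Your proposal is correct and is exactly the intended argument: the paper leaves the proof empty because the statement is meant to follow immediately from the adjunction $\LtrDShN\dashv\FtrN$ of the preceding corollary together with the standard identification $Hom_{\Dba(\ShN)}(\bZ(X),\mathcal G[i])=H^i_{nis}(X,\mathcal G)$. Your unpacking of $\FtrN(F)$ as $F_{nis}$ in $\Dba(\ShN)$ is precisely the implicit step the paper omits.
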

\begin{proof}\end{proof}

Thus we get the diagram
\begin{equation}\label{diag:nisloc}\xymatrix{
\Dba(Pre) \ar@<1ex>[r]^(0.4){\LtrDPre}\ar@<1ex>[d]^{\lnis} & 
\Dba(\PreTr)\ar@<1ex>[d]^{\lNisTr}\ar@<1ex>[l]^(0.6){\FtrDPre}
\\
\Dba(\ShN)\ar@<1ex>[r]^(0.4){\LtrDShN} & 
\DbaNisTr\ar@<1ex>[l]^(0.55){\FtrN}
}\end{equation}
with both squares being commutative.
Now using transfers defied by the functor $\Cor\to \DbaNisTr$ and equality \eqref{eq:TrNislocadjNisloc} we can show that $\ShNTrs$ is abelian and $\DbaNisTr\simeq\DbaShNTrs$.
\begin{corollary}\label{cor:NisCohTr}
Suppose $Cor_{virt}$ is a ringoid such that equation \eqref{eq:CorvNisLoc} holds;
%
then 
the category $\ShNTrs$ is abelian
and there is a reflection $$\LnisTr\colon \PreTr\leftrightharpoons \ShNTr\colon \RnisTr$$ such that 
the right adjoint functor $\RnisTr$ is the embedding functor, the left adjoint $\LnisTr$ is exact and for any $F\in \PreTr$ the sheaf $\LnisTr(F)$ is canonically isomorphic to $F_{nis}$ as a sheaf of abelian groups.
\end{corollary}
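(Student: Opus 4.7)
The plan is to bootstrap everything out of Corollary \ref{lm:HomNisloc=CohNis}. For a presheaf with transfers $F\in\PreTrs$, define a new additive functor $\LnisTr(F)\colon\Cor\to Ab$ by the formula
\[
\LnisTr(F)(X)\;:=\;Hom_{\DbaNisTr}(\bZ_{tr}(X),F),
\]
where the functoriality in $X$ uses the covariant dependence $X\mapsto \bZ_{tr}(X)$ and contravariant $Hom$. By the isomorphism \eqref{eq:TrNislocadjNisloc}, the underlying presheaf of abelian groups of $\LnisTr(F)$ is canonically $F_{nis}$; in particular $\LnisTr(F)$ lies in $\ShNTrs$. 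The natural map $F\to \LnisTr(F)$ in $\PreTrs$ comes from the localisation unit $\Dba(\PreTrs)\to\DbaNisTr$ applied termwise, and on underlying presheaves recovers the ordinary sheafification map.

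Next I would verify the reflection. For $G\in\ShNTrs$ the canonical map $G\to\LnisTr(G)$ is already an isomorphism because $G_{nis}=G$ and the construction is given by a $Hom$ out of representables in $\DbaNisTr$, in which $G$ and $\LnisTr(G)$ represent the same functor on $\{\bZ_{tr}(X)\}_{X\in Sm_k}$. From here, the adjunction $Hom_{\ShNTrs}(\LnisTr(F),G)=Hom_{\PreTrs}(F,G)$ for $G\in\ShNTrs$ follows by the usual Yoneda argument applied to the representable generators $\bZ_{tr}(X)$ of $\PreTrs$.

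To conclude that $\ShNTrs$ is abelian, I would observe that kernels of morphisms between objects of $\ShNTrs$ taken in $\PreTrs$ are already Nisnevich sheaves (kernel of a map of sheaves of abelian groups), so kernels exist in $\ShNTrs$. Cokernels in $\ShNTrs$ are obtained by applying $\LnisTr$ to cokernels in $\PreTrs$. The exactness of $\LnisTr$ reduces to exactness of ordinary Nisnevich sheafification $Pre\to\ShN$: since the underlying presheaf of $\LnisTr(F)$ is $F_{nis}$, a short exact sequence $0\to F^\prime\to F\to F^{\prime\prime}\to 0$ in $\PreTrs$ becomes exact after applying $\LnisTr$ because it is exact on underlying presheaves of abelian groups. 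This is enough to conclude that $\ShNTrs$ is an abelian category in which $\LnisTr$ is exact and left adjoint to the inclusion $\RnisTr$.

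The main subtlety I expect is checking that the assignment $X\mapsto Hom_{\DbaNisTr}(\bZ_{tr}(X),F)$ is genuinely additive and functorial on all of $\Cor$ (not just on $Sm_k$) in a way that is compatible with the presheaf structure on $F_{nis}$ inherited by sheafification. Once the commutative square \eqref{diag:nisloc} is in hand, this compatibility is formal: the left adjoint $\LtrDShN$ sends $\bZ(X)$ to $\bZ_{tr}(X)$, so pulling $F\in\PreTrs$ along correspondences from $\Cor$ agrees, after passage to $F_{nis}$, with the transfer maps produced by the $Hom$ description. No other ingredient beyond Corollary \ref{lm:HomNisloc=CohNis} and the diagram \eqref{diag:nisloc} is needed.
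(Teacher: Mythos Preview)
Your proposal is correct and follows essentially the same approach as the paper: both define the transfers on $F_{nis}$ via $X \mapsto Hom_{\DbaNisTr}(\bZ_{tr}(X),F)$ using Corollary~\ref{lm:HomNisloc=CohNis}, obtain the unit $F \to F_{nis}$ as a morphism in $\PreTrs$ from the localisation map on $Hom$-groups (this is exactly the commutative square the paper draws), and deduce kernels, cokernels, and exactness of $\LnisTr$ by comparison with ordinary Nisnevich sheafification. The paper makes one step slightly more explicit than you do: to pass from ``has kernels and cokernels'' to ``is abelian'' it notes that the forgetful functor $\ShNTrs\to\ShN$ is exact and conservative, which forces coimage $=$ image; you should add this one line.
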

\begin{proof}
Corollary \ref{lm:HomNisloc=CohNis} and the functor $Cor_{virt}\to \PreTr\to \DbaPreTr\to \DbaShNTr$ implies that for any $F\in \PreTr$ the sheaf $F_{nis}$ is equipped with transfers in a natural way.
This defines the functor $\LnisTr$. 

To prove that there is a reflexive adjunction $\LnisTr\dashv\RnisTr$ it is enough to show that the canonical morphism $\nu_F\colon F\to F_{nis}$ is compatible with transfers, since this defies co-unit of the required adjunction, while the unit is the identity $\id_{\ShNTr}$.
%
Diagram \eqref{eq:LocDiag} leads to the commutative square of Hom-groups
$$\xymatrix{
F(X)\ar[d]^{\nu_F}
\ar[d]\ar[r]^(.25){\simeq} 
& Hom_{\DbaPreTr}(\bZ_{tr}(X),F)\ar[d] \\
F_{nis}(X)
\ar[r]^(.25){\simeq} & Hom_{\DbaShNTr}(\bZ_{tr}(X),F)  ,
}$$
such that 
the left vertical arrow is the morphism $\nu_F$ and the right one is the morphism of presheaves with transfers.
%

Now straightforward verification shows that formulas 
$$ \Ker_{\ShNTrs}(f) = \Ker_{\PreTrs}(f),\quad \Coker_{\ShNTrs}(f)=\LnisTr(\Coker_{\PreTr}(f)),$$
defines kernel and cokernel
for any $f\colon F_1\to F_2\in \ShNTr$.
Then 
the forgetful functor $\FtrN\colon \ShNTrs\to \ShN$ is exact. 
So since it is $\FtrN$ conservative, it follows that $\ShNTrs$ is abelian.

Finally, let's note that functors $L_{nis}^{Pre}$ and $F_{tr}^{\PreTrs}$ are exact. Whence, since 
$F_{tr}^{\ShNTrs} \circ \LnisTr = L_{nis}^{Pre}\circ F_{tr}^{\PreTrs} \colon \PreTr\to \ShN$ is exact, 
it follows that $\LnisTr$ is exact.

%
\end{proof}
\begin{lemma}\label{lm:DnisDSh}
There is an equivalence $\DbaNisTr\simeq \DbaShNTr$,
and the functor $\DbaPreTr \to \DbaShNTr$ induced by the exact functor $\LnisDTr$ is equivalent to the functor $\lnistr$ from diagram \eqref{diag:nisloc}.
\end{lemma}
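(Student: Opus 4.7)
The plan is to invoke the exactness of $\LnisTr$ (from Corollary \ref{cor:NisCohTr}) and the reflective structure $\LnisTr\dashv\RnisTr$ to run the standard derived-reflection argument. Since $\LnisTr$ is exact, it lifts termwise to a triangulated functor $\LnisTr\colon\DbaPreTr\to\DbaShNTr$. I would first verify that this inverts Nisnevich quasi-isomorphisms: for a map $f$ in $\DbaPreTr$, exactness gives $h^i(\LnisTr f)\simeq\LnisTr(h^i f)$, and by Corollary \ref{cor:NisCohTr} the functor $\LnisTr$ coincides with ordinary Nisnevich sheafification on underlying abelian presheaves; hence $\LnisTr(h^i f)$ is an iso of sheaves iff $h^i(f)$ becomes an iso after sheafification, i.e.\ iff $f$ is a Nisnevich quasi-isomorphism. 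The universal property of the Verdier quotient then produces a unique triangulated factorization $\LnisTr=\bar{L}\circ\lnistr$ with $\bar{L}\colon\DbaNisTr\to\DbaShNTr$, which already yields the second assertion of the lemma.

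For the inverse, I would apply the inclusion $\RnisTr\colon\ShNTr\hookrightarrow\PreTr$ termwise: a quasi-isomorphism of complexes of sheaves becomes a Nisnevich quasi-isomorphism of complexes of presheaves, because sheafification commutes with taking $h^i$ of a complex. Composing with $\lnistr$ thus yields a well-defined triangulated functor $\Phi\colon\DbaShNTr\to\DbaNisTr$.

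It remains to check that $\bar{L}$ and $\Phi$ are mutually quasi-inverse. The composition $\bar{L}\circ\Phi$ is $\LnisTr\circ\RnisTr$ applied termwise, which is naturally isomorphic to the identity via the counit of the reflection. For the other composition, the unit $\eta_{F^\bullet}\colon F^\bullet\to\RnisTr\LnisTr F^\bullet$ applied termwise satisfies $\LnisTr(\eta_{F^\bullet})=\id$ by the triangle identity, hence by the criterion above is a Nisnevich quasi-iso, hence iso in $\DbaNisTr$; this gives the natural isomorphism $\Phi\circ\bar{L}\simeq\id_{\DbaNisTr}$.

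The only delicate point is keeping straight the two distinct notions of quasi-isomorphism at play — presheaf-cohomology iso in $\DbaPreTr$ versus sheaf-cohomology iso in $\DbaShNTr$ — and systematically using the identification from Corollary \ref{cor:NisCohTr} that $\LnisTr$ acts on underlying presheaves as ordinary Nisnevich sheafification. Once this is set up, everything reduces to standard abstract nonsense about reflective subcategories.
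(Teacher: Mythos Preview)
Your proposal is correct and follows essentially the same approach as the paper: both use the exactness of $\LnisTr$ from Corollary~\ref{cor:NisCohTr} to factor through the Verdier quotient, and both build the inverse from the termwise inclusion $\RnisTr$ by noting that sheaf quasi-isomorphisms become Nisnevich quasi-isomorphisms. Your version is in fact more complete, since you explicitly verify via the unit and counit of the reflection that the two functors are mutually inverse, whereas the paper simply asserts this.
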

\begin{proof}
By corollary \ref{cor:NisCohTr} the functor $\LnisTr$ is exact. Hence it induces the functor on the derived categories.
It follows form definition \ref{def:Dnisloc} that $\Ker(\LnisDTr)$ is the subcategory of Nisnevich acyclic complexes in $\DbaPreTrs$. Whence $\LnisDTr$ can be passed throw a functor $\DbaNisTrs\to \DbaShNTrs$.

Conversely, the functor $\RnisTr(f)$ form \ref{cor:NisCohTr} induces the functor $\RnisTr(f)\colon K^-(\ShNTrs)\to K^-(\PreTrs).$ It  follows form definition \ref{def:Dnisloc} that $\RnisTr$ takes quasi-isomorphisms in $\DbaShNTrs$ to Nisnevich-quasi-isomorphisms in $\DbaNisTrs$. Hence this defines the inverse functor $\DbaShNTrs\to \DbaNisTrs$\footnote{Note that $\RnisTr$ doesn't induce a functor $\DbaShNTrs\to\DbaPreTrs$.}.
\end{proof}

\begin{theorem}\label{th:ExtTr=Hnis}
Suppose $Cor_{virt}$ is a ringoid such that equation \eqref{eq:CorvNisLoc} holds;
then the category of sheaves with transfers $\ShNTr$ is abelian and 
\begin{equation}\label{eq:ExtNCoh}Ext^i_{\ShNTr}(\bZ_{tr,nis}(X),F)=H^i_{nis}(X,F).\end{equation}
\end{theorem}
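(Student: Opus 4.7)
The plan is essentially to assemble the three preparatory results just established in the excerpt, treating the theorem as their packaging.

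First, I would read off the abelianness of $\ShNTr$ directly from Corollary \ref{cor:NisCohTr}, where it was already obtained as a consequence of the existence of the exact reflection $\LnisTr \colon \PreTr \rightleftharpoons \ShNTr \colon \RnisTr$ and the conservativity of the forgetful functor $\FtrN\colon \ShNTr\to \ShN$. So the first sentence of the theorem requires no new work.

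For the Ext computation, I would chain the following identifications. By definition of Ext in an abelian category (using that $\ShNTr$ is abelian with enough of the correct structure),
\[
Ext^i_{\ShNTr}(\bZ_{tr,nis}(X),F) = Hom_{\DbaShNTr}(\bZ_{tr,nis}(X),F[i]).
\]
Lemma \ref{lm:DnisDSh} gives an equivalence $\DbaShNTr\simeq \DbaNisTr$ implemented by the derived Nisnevich sheafification, and under this equivalence the object $\bZ_{tr,nis}(X)=\LnisTr(\bZ_{tr}(X))$ on the right corresponds to $\bZ_{tr}(X)$ on the left. Hence
\[
Hom_{\DbaShNTr}(\bZ_{tr,nis}(X),F[i]) \simeq Hom_{\DbaNisTr}(\bZ_{tr}(X),F[i]).
\]
Finally, Corollary \ref{lm:HomNisloc=CohNis} identifies the right-hand side with $H^i_{nis}(X,F)$, which yields the desired formula \eqref{eq:ExtNCoh}.

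The only nontrivial point is making sure the identification $\bZ_{tr,nis}(X)\leftrightarrow \bZ_{tr}(X)$ along the equivalence in Lemma \ref{lm:DnisDSh} is compatible with the adjunctions appearing in diagram \eqref{diag:nisloc}; this is the step I would scrutinize most carefully, since it is where the previously checked commutativity $\LtrDShN\circ \lnis \simeq \lNisTr\circ \LtrDPre$ is actually used. Everything else is a matter of quoting the earlier lemmas and corollaries in sequence, so no substantial new estimate or construction is needed at this stage.
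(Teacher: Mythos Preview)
Your proposal is correct and follows essentially the same route as the paper: abelianness from Corollary~\ref{cor:NisCohTr}, then the identification $Ext^i_{\ShNTr}(\bZ_{tr,nis}(X),F)=Hom_{\DbaNisTr}(\bZ_{tr}(X),F[i])$ via Lemma~\ref{lm:DnisDSh} and $\LnisTr(\bZ_{tr}(X))=\bZ_{tr,nis}(X)$, and finally Corollary~\ref{lm:HomNisloc=CohNis} for the Nisnevich cohomology. Your extra remark about checking the compatibility of the correspondence $\bZ_{tr,nis}(X)\leftrightarrow \bZ_{tr}(X)$ with diagram~\eqref{diag:nisloc} is exactly the content of the clause ``since $\LnisTr(\bZ_{tr}(X))=\bZ_{tr,nis}(X)$'' in the paper's proof.
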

\begin{proof}
It is already proven in corollary \ref{cor:NisCohTr}
that $\ShNTr$ is abelian,
and 
since $\LnisTr(\bZ_{tr}(X))=\bZ_{tr,nis}(X), $
isomorphism \eqref{eq:ExtNCoh} follows form isomorphism \eqref{eq:TrNislocadjNisloc}
via lemma \ref{lm:DnisDSh}. 
\end{proof}

By corollary \ref{cor:NisCohTr} the sheaves $\bZ_{tr,nis}(X)$ are equipped with GW-transfers in a natural way. This gives a sense to formula of effective motives \eqref{eq:intr:MGW}.  
Let's show that the defined structure of transfers on $\bZ_{tr,nis}(X)$ is the unique structure 
such that the canonical homomorphism $\bZ_{tr}(X)\to \bZ_{tr,nis}(X)$ is a homomorphism of presheaf with transfers.

\begin{proposition}\label{prop:uniqnistrans}
Suppose $\Cor$ is a ringoid such that equation \eqref{eq:CorvNisLoc} holds;
then for any presheaf with $\Cor$-transfers $F$ 
there is a unique 
structure of a presheaf with transfers on 
$F_{nis}$ 
such that 
the canonical homomorphism $\nu\colon F\to F_{nis}$
is a morphism of presheaves with transfers. 
%

\end{proposition}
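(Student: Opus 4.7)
The plan is as follows. Existence is immediate from Corollary \ref{cor:NisCohTr}, so the task reduces to showing uniqueness. My approach will be to pin down the transfer action on $F_{nis}$ by restricting to Nisnevich stalks and applying relation (NL) to decompose correspondences from essentially smooth local henselian schemes.

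First, I would observe that for any essentially smooth local henselian scheme $U = \varprojlim_\alpha U_\alpha$ over $S$, the canonical map $\nu_U \colon F(U) \to F_{nis}(U)$ is an isomorphism, since both sides compute the stalk $\varinjlim_\alpha F(U_\alpha)$ (a presheaf and its Nisnevich sheafification share stalks). Consequently, any presheaf-with-transfers structure on $F_{nis}$ compatible with $\nu$ must coincide, along morphisms between essentially smooth local henselian schemes (interpreted as morphisms in the pro-category of $\Cor$), with the transfer action of $F$ itself under the identifications $\nu_U$. In particular, its restriction to such morphisms is already uniquely determined.

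Next, given $\Phi \in \Cor(Y, X)$ and $s \in F_{nis}(X)$, I would show that $\Phi^*(s) \in F_{nis}(Y)$ is forced. Because $F_{nis}$ is a Nisnevich sheaf it is separated, so the map $F_{nis}(Y) \to \prod_{y \in Y} F_{nis}(Y^h_y)$ is injective and it suffices to compute each stalk. Writing $\iota_y \colon Y^h_y \to Y$ and $\iota_x \colon X^h_x \to X$ for the canonical morphisms and using functoriality of transfers, $\iota_y^*(\Phi^*(s)) = (\Phi \circ \iota_y)^*(s)$. Applying (NL) to $\Phi \circ \iota_y \in \Cor(Y^h_y, X)$ yields a decomposition $\Phi \circ \iota_y = \sum_{x \in X} \Phi_x$ with $\Phi_x \in \Cor(Y^h_y, X^h_x)$, and hence
$$\iota_y^*(\Phi^*(s)) = \sum_{x \in X} \Phi_x^*(\iota_x^*(s)).$$
By the first step each summand on the right involves only transfers between local henselian schemes and is therefore determined by $F$'s own transfer structure, which proves uniqueness.

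The main obstacle will be making rigorous the pro-category composition $\Phi \circ \iota_y$ and its decomposition via (NL), together with the compatibility of $\Phi^*_{F_{nis}}$ with restriction to henselizations. Both rely on extending $F_{nis}$ and $\Cor$-transfers to essentially smooth local schemes via the standard convention $F_{nis}(\varprojlim U_\alpha) := \varinjlim F_{nis}(U_\alpha)$, already implicit in the usage of (NL) throughout Section \ref{sect:GWNiscoh}; once this is in place, the stalk-wise argument goes through routinely.
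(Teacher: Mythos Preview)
Your proof is correct and follows essentially the same strategy as the paper's: existence comes from Corollary~\ref{cor:NisCohTr}, and uniqueness is obtained by combining separatedness of $F_{nis}$ on the source with relation~(NL) to reduce the transfer action to data already determined by $F$. The only difference is that the paper works with finite Nisnevich covers $\mathcal U\to X$, $\mathcal V\to Y$ and lifts $\Phi$ to some $\widetilde\Phi\in\Cor(\mathcal U,\mathcal V)$, while you pass directly to henselian stalks---the two formulations are interchangeable once (NL) is available.
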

\begin{proof}
The existence is already proved in corollary \ref{cor:NisCohTr}.
Let's show the uniqueness. 
Let $G_1$ and $G_2$ be a pair of presheaves with 
morphisms $F \xrightarrow{\eta_i} G_i \stackrel{\tau_i}{\simeq} F_{nis}$, $i=1,2,$ 
where $\eta_i$ are morphisms in $\PreTr$ and $\tau_i$ are isomorphisms in $Pre$ such that $\tau_i\circ\eta_i=\nu$.
The claim is to prove 
that $\tau_1( G_1(\Phi)(\tau_1^{-1}(a)) )=\tau_2( G_2(\Phi)(\tau_2^{-1}(a)) )$
for any $X,Y\in Sm_k$, $\Phi\in Cor_{virt}(X,Y)$, and $a\in F_{nis}(Y)$.

Let $v\colon \mathcal V\to Y$ be a Nisnevich covering and let $\widetilde a\in F(\mathcal V)\colon$ $\nu(\widetilde a)=v^*(a)$. 
It follows form equality \eqref{eq:CorvNisLoc} that there is a Nisnevich covering $u\colon \mathcal U\to X$ and $\widetilde\Phi\in Cor_{virt}(\mathcal U,\mathcal V)$
such that $v\circ\widetilde\Phi=\Phi\circ u\in Cor_{virt}(\mathcal U,Y))$.
Then 
$$u^*(G_i(\Phi)(\tau_i(a))) = G_i(\Phi)(\tau_i(v^*(a))) = G_i(\widetilde \Phi)(\eta_i(\widetilde a)) = \eta_i( F(\widetilde \Phi)(\widetilde a) ) 
,
$$
and whence $$u^*( \tau^{-1}_1( G_1(\Phi)(\tau_1(a)) ) ) = \tau^{-1}_1( u^*(G_1(\Phi)(\tau_1(a))) ) = \tau^{-1}_1( \eta_i( F(\widetilde \Phi)(\widetilde a) ) ) = \nu( F(\widetilde \Phi)(\widetilde a) ) .$$
Now since $v$ is a Nisnevich covering and homomorphism $v^*\colon F_{nis}(Y)\to F_{nis}(\mathcal V)$ is injective,
the claim follows.
%
%
%
%
%
\end{proof}

We finish the section by the following
\begin{corollary}\label{cor:GWNisKerGen}
Suppose $\Cor$ a ringoid such that equation \eqref{eq:CorvNisLoc} holds;
then 
the subcategory of Nisnevich-acyclic complexes in $\DbaPreTrs$ is  the thick subcategory generated by complexes 
\begin{equation}\label{eq:cNissq}
Tot(\mathbb Z_{tr}(\mathcal N))=[\dots\to 0\to \mathbb{Z}_{tr}(\tilde U)\rightarrow \mathbb{Z}_{tr}(U)\oplus \mathbb{Z}_{tr}(\tilde X)\rightarrow \mathbb Z_{tr}(X)\to \dots] 
\end{equation}
for all Nisnevich squares $\mathcal N$:
\begin{equation}\label{eq:Nissq}
\xymatrix{
\tilde U\ar[r]\ar[d]& \tilde X\ar[d]\\
U\ar[r] & X
}
\end{equation}  
\end{corollary}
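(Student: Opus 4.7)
The statement splits into two inclusions. For the easy direction, showing each $Tot(\bZ_{tr}(\mathcal N))$ lies in the Nisnevich-acyclic subcategory $\mathcal{K} \subset \DbaPreTrs$, I would apply the exact sheafification functor $\LnisTr$ from corollary \ref{cor:NisCohTr}: the resulting complex is the Mayer--Vietoris sequence $0 \to \bZ_{tr,nis}(\tilde U) \to \bZ_{tr,nis}(U) \oplus \bZ_{tr,nis}(\tilde X) \to \bZ_{tr,nis}(X) \to 0$ of representable Nisnevich sheaves attached to a distinguished Nisnevich square, which is exact by the defining property of such squares (the transfer structure on these sheafifications is the canonical one from proposition \ref{prop:uniqnistrans}). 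Hence the thick subcategory $\mathcal{T}$ generated by the $Tot(\bZ_{tr}(\mathcal N))$ is contained in $\mathcal{K}$.

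The substantive work is the reverse inclusion $\mathcal{K} \subset \mathcal{T}$. Via lemma \ref{lm:DnisDSh} the Verdier quotient $\DbaPreTrs / \mathcal{K}$ is equivalent to the Nisnevich localisation $\lNisTr\colon \DbaPreTrs \to \DbaShNTrs$. Since $\mathcal{T} \subset \mathcal{K}$, this factors as
$$\DbaPreTrs \xrightarrow{Q} \DbaPreTrs/\mathcal{T} \xrightarrow{\bar L} \DbaShNTrs,$$
and my plan is to prove that $\bar L$ is an equivalence, forcing $\mathcal{T} = \mathcal{K}$. Essential surjectivity of $\bar L$ is immediate from the surjectivity of $\lNisTr$ on objects; hence it remains to verify fully faithfulness.

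For fully faithfulness it suffices to test on the generating family $\{\bZ_{tr}(X)\}_{X \in Sm_k}$ with targets $F[i]$ for $F \in \ShNTrs$. On the right the Hom group is $H^i_{nis}(X, F)$ by theorem \ref{th:ExtTr=Hnis}. On the left, Verdier calculus expresses $Hom_{\DbaPreTrs/\mathcal{T}}(\bZ_{tr}(X), F[i])$ as a filtered colimit over roofs $\bZ_{tr}(X) \xleftarrow{s} C^\bullet \to F[i]$ with $\mathrm{Cone}(s) \in \mathcal{T}$. My plan is to show that such roofs are cofinally indexed by Čech resolutions $\check{C}(\mathcal U) \to \bZ_{tr}(X)$ for Nisnevich covers $\mathcal U \to X$, so that the colimit computes Čech--Nisnevich cohomology, which agrees with $H^i_{nis}(X, F)$.

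The main obstacle is therefore the cofinality statement: given a Nisnevich cover $\mathcal U \to X$, one must verify that $\mathrm{Cone}(\check{C}(\mathcal U) \to \bZ_{tr}(X)) \in \mathcal{T}$. The key input is the classical fact that every Nisnevich cover of $X$ admits a refinement obtained by a finite iteration of upper-distinguished (Nisnevich) squares; combining this with induction on the length of the iteration and the two-out-of-three property of the thick subcategory $\mathcal{T}$ under distinguished triangles should realise the Čech complex as a successive extension of totalisations $Tot(\bZ_{tr}(\mathcal N))$ and complete the proof.
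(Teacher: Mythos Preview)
Your approach differs substantially from the paper's and misses a key shortcut. The paper never computes $Hom$-groups in the quotient $\DbaPreTrs/\mathcal T$ directly; instead it bootstraps from the analogous (already known) statement for presheaves \emph{without} transfers. The observation is that the adjunction $\mathcal L_{tr} \dashv F_{tr}$ between $\DbaPre$ and $\DbaPreTrs$ descends to the Verdier quotients by $\Tau$ and $\Tau^{tr}$: one has $\mathcal L_{tr}(\Tau)\subset\Tau^{tr}$ trivially, while $F_{tr}(\Tau^{tr})\subset\Tau$ because $\Tau^{tr}$ lies inside the Nisnevich-acyclic complexes (via theorem~\ref{th:WtrNisEx}), $F_{tr}$ preserves Nisnevich acyclicity by definition, and in $\DbaPre$ the Nisnevich-acyclic complexes coincide with $\Tau$ by the no-transfers result. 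The induced adjunction yields
\[
Hom_{\DbaPreTrs/\Tau^{tr}}(\bZ_{tr}(X), F^\bullet)\;\cong\;Hom_{\DbaPre/\Tau}(\bZ(X), F_{tr}(F^\bullet)),
\]
and for Nisnevich-acyclic $F^\bullet$ the right side vanishes. Since the $\bZ_{tr}(X)$ generate, every Nisnevich-acyclic $F^\bullet$ lies in $\Tau^{tr}$. No \v{C}ech complexes, no cofinality argument. This is exactly the organisational point of the paper: condition~\eqref{eq:CorvNisLoc} is packaged once into theorem~\ref{th:WtrNisEx}, after which every Nisnevich-theoretic question is pushed down to the transfer-free setting.

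Your sketch also contains a genuine gap. You call the ``main obstacle'' for cofinality the verification that $\Cone(\check C(\mathcal U)\to\bZ_{tr}(X))\in\mathcal T$, but that is not cofinality. That containment only shows that \v{C}ech resolutions furnish \emph{some} admissible roofs in the calculus of fractions; cofinality requires that \emph{every} roof $C^\bullet\to\bZ_{tr}(X)$ with cone in $\mathcal T$ is dominated by a \v{C}ech one, and the iterated-square induction you propose does not address that direction at all. Without genuine cofinality the filtered colimit you write is indexed by a strictly larger system than \v{C}ech covers, and its identification with \v{C}ech--Nisnevich cohomology is unjustified.
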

\begin{proof}
We deduce the claim form the similar statement for the category 
of presheaves without transfers. 

Let $\Tau$ and $\Tau^{tr}$ be localising subcategories in $\DbaPre$ and $\DPreTrs$ generated by complexes \eqref{eq:cNissq}.
The claim is equivalent to that $Hom _{ \DbaPreTrs/\Tau^{tr}}(\bZ_{tr}(X),F^\bullet) = 0$
for any Nisnevich acyclic complex $F^\bullet\in \DbaPreTrs$ and any smooth scheme $X$.

It follows from the definition that $\mathcal L_{tr}(\Tau)=L_{tr}(\Tau)\subset \Tau^{tr}$, and it follows form theorem \ref{th:WtrNisEx}
that $F_{tr}(\Tau^{GW})\subset \Tau$. So the adjunction $\mathcal L_{tr} \dashv F_{tr}$ induces the adjunction 
$$\mathcal L_{tr}\colon \DbaPre/\Tau\leftrightharpoons \DbaPreTrs/\Tau^{tr}\colon F_{tr},$$
and for any $X\in Sm_k$ and $F^\bullet \in \DbaPreTrs$ we have
\begin{equation}\label{eq:adjGWtrTau}Hom _{ \DbaPreTrs/\Tau^{tr}}(\bZ_{tr}(X),F^\bullet) = Hom _{ \DbaPre/\Tau}(\bZ(X), F_{tr}(F^\bullet)).\end{equation}
Hence if $F^\bullet$ is Nisnevich-acyclic, then $F_{tr}(F^\bullet)$ is Nisnevich-acyclic by definition, 
and it follows that the right side of \eqref{eq:adjGWtrTau} is zero, 
since the family of complexes $Tot(\bZ(\mathcal N))$ 
generates $\Tau$ (\cite[Appendix, proposition 6.6]{AD_DMGWeff}).
(The complex $Tot(\bZ(\mathcal N))$ is defined like complex \eqref{eq:cNissq} with $\mathbb Z(-)$ instead of $\mathbb Z_{tr}(-)$.)
\end{proof}

\section{Categories of GW- and Witt-motives.} 
\label{sect:TensStr}

Since as noted in the introduction any homotopy invariant Nisnevich excisive cohomology theory with the transfers defined by 
a ringoid
$\Cor$ should be passed throw the 
category of effective motives,
it is natural to define the category of $\Cor$-motives as follows: 

\begin{definition}\label{def:DMeff}
Given a base scheme $S$, and a ringoid $\Cor$
over $S$,
define a category $\DMeffmCorvl$ with a functor $l_{\affl,nis}\colon \DbaPreTrs\to \DMeffmCorvl$ as
the localisation 
in respect to morphisms of the form $Tot(\mathbb Z_{tr}(\mathcal N)) \to 0$ for all Nisnevich squares $\mathcal N$ (see \eqref{eq:cNissq}, \eqref{eq:Nissq}) and morphisms of the form $X\times\affl\to X$, $X\in Sm_S$.
The functor $M^{\Cor}\colon Sm_k\to \DMeffmCorv$ given by $
l_{\affl,nis}(\bZ_{tr}(-))$ is called as \emph{effective motive of smooth schemes}. 
\end{definition}

%


%
In this section we define the tensor structure on $\DMeffmCorvl$. 
Formally the construction presented here 
differs from the construction from \cite{SV_Bloch-Kato} for the original case of $\mathbf{DM}(k)$, but these constructions lead to the same resulting tensor structure.

The tensor product functor 
is defined according to the following diagram
$$\xymatrix{
Sm_k\times Sm_k \ar[d]|{-\times -}\ar[r] & (\PreTrs)^{\times 2}\ar[d]|{-\otimes^{Pre} -}\ar[r] & 
\DbaPreTrs^{\times 2} \ar[d]|{-\otimes^{\mathbf{D}} -}\ar[r] & \DMeffmCorvl(k)^{\times2}\ar[d]|{-\otimes^{\mathbf{DM}} -}\\
Sm_k\ar[r]^{\mathbb Z_{GW}} & \PreTrs \ar[r] & \DbaPreTrs_k \ar[r]^{l_{nis,\affl}} & \DMeffmCorvl(k) 
}$$
where each next vertical arrow satisfies the left universal property (a left Kan extension or a left derived functor). More precisely, let's give the following definition:
\begin{definition}
1)
The functor $\otimes^{Pre}$ is the left Kan extension of the functor $(X,Y)\mapsto \mathbb Z_{tr}(X\times Y)$ with respect to the functor $Sm^2_k\to (\PreTrs)^{\times 2}$.
Precisely, the functor $\otimes^{Pre}$ can be defined by the formula
\begin{equation*}\label{eq:pretens}
\mathcal F_1\otimes\mathcal F_2 = 
\varinjlim\limits_{S_{\mathcal F_1,\mathcal F_2}} 
\mathbb Z_{tr}(U_1\times U_2),\;
\mathcal S_{\mathcal F_1,\mathcal F_2} = 
\{(a_1,a_2,U_1,U_2)\colon a_1\in \mathcal F(U_1), a_2\in \mathcal F(U_2)\}
,\end{equation*}
$S_{\mathcal F_1,\mathcal F_2}((b_1,b_2,V_1,V_2),(a_1,a_2,U_1,U_2)) = \{(f_1,f_2)|f_i\in \Cor(V_i,U_i), b_i = f_i^*(a_i), i=1,2\}$.

2)
Consider the projective model structure on $K^-(\PreTrs)$ and corresponding model structure on $K^-(\PreTrs)^2$. 
Define $\otimes^{\mathbf D}$ as the left derived functor to the functor $\otimes^{Pre}$ 
that is section-wise application of $\otimes^{Pre}$.
Precisely, the functor $\otimes^{\mathbf D}$ is given by 
the formula 
$(A^\bullet,B^\bullet)\mapsto Tot(\tilde A^\bullet, \tilde B^\bullet)$,
where $\widetilde{ A}^\bullet\to A^\bullet$ and $\widetilde{ B}^\bullet\to B^\bullet$ are quasi-equivalences such that $\widetilde{ A}^\bullet$ and $\widetilde{ B}^\bullet$ are complexes with terms being direct sums of representable presheaves, in particular we can use representable resolvents produced by iterating of the procedure from the previous point.
\end{definition}
The functor $\otimes^{\mathbf{DM}}$ can be defined in a same way (as derived functor), but, indeed, the functor $\otimes^{\mathbf{D}}$ preserves Nisnevich- and $\affl$-quasi-iso\-mor\-phisms and so it just defines a functor on the localisations. 

\begin{proposition}\label{prop:MotiveProd}
The functor $\otimes^{\mathbf D}$ is exact in respect to the localisation $$l_{\affl,nis}\colon \DbaPreTr\to \DMeffmCorvl,$$ 
and 
$$M^{\Cor}(X)\otimes^{\mathbf{DM}}M^{\Cor}(Y)=M^{\Cor}(X\times Y),$$
where $\otimes^{\mathbf{DM}}\colon ({\DMeffmCorvl(k)})^2\to \DMeffmCorvl(k)$  denotes the induced functor.
\end{proposition}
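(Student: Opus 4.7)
The plan is to verify that $\otimes^{\mathbf D}$ preserves Nisnevich quasi-isomorphisms and $\affl$-equivalences in each variable, so that it descends to a bifunctor on $\DMeffmCorvl$; the product formula on motives of smooth schemes will then fall out of the defining left Kan extension property.

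By construction of $\otimes^{Pre}$ we have $\mathbb Z_{tr}(X)\otimes^{Pre}\mathbb Z_{tr}(Y)\simeq \mathbb Z_{tr}(X\times Y)$. Since representables are projective objects of $\PreTrs$, a representable resolvent is a ``flat'' model for the derived functor, so the same identity is valid in $\DbaPreTrs$. To push this identity through the localisation $l_{\affl,nis}$ it suffices to check that $\otimes^{\mathbf D}$ sends generators of Nisnevich- and $\affl$-equivalences to such equivalences; then the universal property of localisation produces the induced $\otimes^{\mathbf{DM}}$ automatically.

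For the Nisnevich step I would invoke Corollary~\ref{cor:GWNisKerGen}: the Nisnevich-acyclic complexes in $\DbaPreTrs$ form the thick subcategory generated by the totalisations $Tot(\mathbb Z_{tr}(\mathcal N))$ of Nisnevich squares $\mathcal N$. Since $\mathbb Z_{tr}(Y)\otimes^{\mathbf D}-$ is a triangulated endofunctor commuting with direct sums, it is enough to check preservation on these generators. Base change of a Nisnevich square $\mathcal N$ along $X\times Y\to X$ is again a Nisnevich square $\mathcal N\times Y$, and the Kan extension formula gives termwise
\[
\mathbb Z_{tr}(Y)\otimes^{\mathbf D} Tot(\mathbb Z_{tr}(\mathcal N))\;\simeq\; Tot(\mathbb Z_{tr}(\mathcal N\times Y)),
\]
which lies in the generating family. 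Replacing the first slot by a representable resolvent extends this to the two-variable statement. An identical argument, with the generating morphisms $\mathbb Z_{tr}(X\times\affl)\to \mathbb Z_{tr}(X)$ in place of the Nisnevich square totalisations and using $(X\times\affl)\times Y=(X\times Y)\times\affl$, handles the $\affl$-equivalences.

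Combining the two preservation properties, $\otimes^{\mathbf D}$ factors through $l_{\affl,nis}$ in each variable and yields $\otimes^{\mathbf{DM}}$; applying $l_{\affl,nis}$ to the identity $\mathbb Z_{tr}(X)\otimes^{\mathbf D}\mathbb Z_{tr}(Y)=\mathbb Z_{tr}(X\times Y)$ then gives the displayed formula for $M^{\Cor}(X)\otimes^{\mathbf{DM}}M^{\Cor}(Y)$. The main technical point, and the one on which I would spend most care, is justifying the use of representable resolvents when tensoring against a complex that is only Nisnevich- or $\affl$-locally trivial in the other slot, i.e.\ that the reduction to the two generating families of Corollary~\ref{cor:GWNisKerGen} and Definition~\ref{def:DMeff} really does capture every equivalence inverted by $l_{\affl,nis}$; this is the place where the projectivity of representables in $\PreTrs$ and the thick-subcategory description do the essential work.
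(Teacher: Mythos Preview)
Your proposal is correct and follows essentially the same approach as the paper's proof: both use projectivity of representables to get $\mathbb Z_{tr}(X)\otimes^{\mathbf D}\mathbb Z_{tr}(Y)\simeq\mathbb Z_{tr}(X\times Y)$, and then observe that $-\times Y$ preserves Nisnevich squares and morphisms $\affl\times U\to U$ so that the tensor product descends. Your write-up is more explicit than the paper's (which is a two-sentence sketch), in particular by invoking Corollary~\ref{cor:GWNisKerGen} to justify that preservation on the generating families suffices; this is exactly the point the paper leaves implicit.
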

\begin{proof}
Since representable presheaves $\mathbb Z_{tr}(X)$ for $X\in Sm_k$ are projective objects in $\PreTrs$,
it follows that $\mathbb Z_{tr}(X)\otimes^{\mathbf D} \mathbb Z_{tr}(Y) = \mathbb Z_{tr}(X\times Y)$.
So 
since the functors $-\times X$ and $X\times -$ preserves morphisms of the form $\affl\times U\to U$ and Nisnevich squares,
the claim follows.
\end{proof}
Using the tensor structure on the category $\DMeffmCorvl$ 
we can define the category of (non-effective) $\Cor$-motives $\mathbf{DM}^{\Cor}$.

\begin{definition}
We define $\bGm^{\wedge 1} = \Cone(pt\stackrel{1}{\hookrightarrow} \bGm)$
,
$\bGm^{\wedge k} = (\bGm^{\wedge 1})^{\otimes k}$
,
and for any $A^\bullet\in D^{W}_{eff}(k)$, we denote $A^\bullet(n) = \bGm^{\wedge k}\otimes A^\bullet$.
\end{definition}

\begin{definition}
Define the category of (non-effective) motives $\mathbf{DM}^{\Cor,-}(k)$ 
by inverting of $-\otimes\bGmw$
in  $\DMeffmCorvl(k)$. 
So objects of $\DMeffmCorvl(k)$ are 
$M(n)$ where $M\in \DMeffmCorvl(k)$ and $n\in \mathbb Z$,
and $Hom(M_1(n_1),M_2(n_2)) = \varinjlim\limits_{i\in \mathbb Z}(M_1\otimes\bGm^{\wedge i+n_1}, M_1\otimes\bGm^{\wedge i+n_2})$.
A functor 
$\Sigma^\infty_{\bGm}\colon \DMeffmCorvl(k) \to \mathbf{DM}^{\Cor,-}(k)$ takes a motivic complex
$M$  to the spectrum  $M(0)$.

%
\end{definition}

\begin{definition}
Define the category of (non-effective) motives $\mathbf{DM}^{\Cor,-}(k)$ 
as the homotopy category of $\bGmw$ spectra over the category $\DMeffmCorvl(k)$

So firstly consider the category $Sp_{\bGm}(\DMeffmCorvl(k))$ with 
objects being 
sequences
$$E = (E_0^\bullet,s_0,E_1^\bullet,s_1,\dots,E_n^\bullet,s_n,\dots), E_i^\bullet\in \DMeffmCorvl(k),
s_i\in Hom_{\DMeffmCorvl(k)}(E_i^\bullet(1),E_{i+1}^\bullet)$$ 
and 
$Hom_{Sp_{\bGm}(\DMeffmCorvl(k))}(E,F) = \varinjlim_i Hom_{\DMeffmCorvl(k)}(E_i,F_i)$
for any $E= (E_0,\dots, E_i,\dots)$ and $F = (F_0,\dots, F_i,\dots)$.
Now define the category $\mathbf{DM}^{\Cor,-}(k)$ as the localisation of the category $Sp_{\bGm}(\DMeffmCorvl(k))$ in respect to the localizing class of morphisms $f\colon E\to F\in \DMeffmCorvl(k)$ such that for any $X\in Sm_k$ the morphism $f$ induces isomorphisms $$Hom(M^{GW}_{eff}(X),E)\simeq Hom(M^{GW}_{eff}(X),F).$$

A functor 
$\Sigma^\infty_{\bGm}\colon DM^{GW}_{eff}(k) \to DM^{GW}(k)$ takes a motivic complex
$E^\bullet$ to the spectrum with terms $E^\bullet(i)$ and homomorphisms $id_{E^\bullet(n)}$.
\end{definition}

\section{Effective motives of smooth varieties}\label{sect:DMeff}

It follows form 
lemma \ref{lm:DnisDSh} and definition \ref{def:DMeff}
that if a ringoid $\Cor$ satisfies condition \eqref{eq:CorvNisLoc},
then the category of effective motives $\DMeffmCorvl$ is equal to the localisation of $\DbaShNTrs$ in respect to morphisms of the form $X\times\affl\to X$.

Moreover, 
if we assume in addition 
that $\Cor$ satisfies the \emph{strictly homotopy invariance axiom}: 
\begin{itemize}[leftmargin=5pt]
\item[]\emph{(SHI) for any homotopy invariant presheaf $F\in \PreTrs$ 
the presheaves $h_{nis}^i(F_{nis})$ are hom. inv. for all $i\geq 0$,}
\end{itemize}
then the category $\DMeffmCorvl$
is equivalent to 
the full subcategory $\DMeffmCorvr$ of $\DbaShNTrs$
spanned by motivic complexes. 
The category $\DMeffmCorvr$ can be considered as the computation for $\DMeffmCorvl$, 
since hom-groups in $\DMeffmCorvr$ are equal by definition to the hom-groups in the derived category $\DbaShNTrs$, which are simpler for computations.

Let's note also that the equivalence $\DMeffmCorvl\simeq \DMeffmCorvl$ follows from the original methods of the work \cite{VSF_CTMht_DM} 
or form the general technique presented in \cite{GarPan-Kmot14}.
But 
we present here 
another proof of this result via the semi-orthogonal decompositions on $\mathbf{D}(\PreTrs)$,
since the additional and intermediate results in our proof are also useful for the aim of the cancellation theorem for GW- and Witt-correspondences, which as noted above is not the content the of this article nevertheless.

Informally, the reason why 
the strictly homotopy invariance axiom 
yields that $\DMeffmCorvr\simeq\DMeffmCorvl$
is that it implies some coherence of two structures 
combined in the category of effective motives, namely,
Nisnevich topology and the structure of category with interval on $Sm_k$  induced by $\affl$;
in terms of semi-orthogonal decompositions the "coherence" above means, in fact, that
the Nisnevich topology and the 'action' of affine line induce compatible semi-orthogonal decompositions on $\mathbf{D}(\PreTrs)$.
We refer reader to \cite{Neem_TriangCat} and \cite{Krause_localizationinTriangCat} for the used facts about localisations and semi-orthogonal decompositions in 
triangulated categories. 
\vspace{3pt}



Consider the endo-functor $F\mapsto F(-\times \affl)$ on the abelian category $\PreTrs$. Since it is exact, it induce the endo-functor 
on the derived category $\DbaPreTrs$. Let's denote this end-functor as $\mathcal Hom(\affl,-)$; denote by $s_0\colon S\to Id$, $s_1\colon S\to Id$ the natural transformations induced by the inclusions $pt\hookrightarrow \affl$ by zero and unit points respectively;
and denote by $p\colon Id\to S$ the natural transformation induced by the canonical projection $\affl\to pt$.

\begin{definition}\label{def_afficObj}
An object $A^\bullet\in \DbaPreTr$ is called $\affl$-\emph{invariant} 
whenever the morphism $p(A^\bullet)\colon A^\bullet\to S(A^\bullet)$ is isomorphism,
and 
$A^\bullet$ is called $\affl$-\emph{contractable} 
whenever there is a morphism $h\colon A^\bullet \to S(A^\bullet)$ such that 
$s_0\circ h=\id_{A^\bullet}$ and $s_1\circ h=0$.

Let 
$\iaffDbaP\subset \DbaPreTr$
be the full triangulated subcategory spanned by homotopy invariant objects,
and 
$\caffDbaP\subset \DbaPreTr$ 
be the thick subcategory generated by $\affl$-contractable objects.  
\end{definition}

%
%

\begin{lemma}\label{lm:contrinvort}
Suppose $A^\bullet\in \iaffDbaP$ and $B^\bullet\in \caffDbaP$,
then $$Hom_{\DbaPreTrs}(B^\bullet,A^\bullet)=0.$$
\end{lemma}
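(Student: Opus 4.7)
The plan is to first reduce the claim to the case where $B^\bullet$ is itself $\affl$-contractable, rather than merely an object of the thick hull $\caffDbaP$. For fixed $A^\bullet \in \iaffDbaP$ the functor $\Hom_{\DbaPreTrs}(-, A^\bullet)$ is cohomological, so the full subcategory of objects $B^\bullet$ with $\Hom_{\DbaPreTrs}(B^\bullet, A^\bullet) = 0$ is closed under shifts, cones, and direct summands, hence is thick. It therefore suffices to verify the vanishing for the generating class of honestly $\affl$-contractable objects.

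The next step is to unpack the formal identities between the natural transformations $p$, $s_0$, $s_1$. Because the two compositions $\pt \hookrightarrow \affl \to \pt$ (at the $0$- and $1$-sections) each equal $\id_{\pt}$, functoriality gives $s_0 \circ p = \id = s_1 \circ p$ as natural endotransformations of $\id_{\DbaPreTrs}$. Evaluating on an $\affl$-invariant object $A^\bullet$, the arrow $p(A^\bullet)$ is by definition an isomorphism, so both $s_0(A^\bullet)$ and $s_1(A^\bullet)$ must coincide with $p(A^\bullet)^{-1}$; in particular $s_0(A^\bullet) = s_1(A^\bullet)$. This equality is the heart of the argument.

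The final step combines these observations. Given any $f \colon B^\bullet \to A^\bullet$ and a contracting morphism $h \colon B^\bullet \to S(B^\bullet)$ with $s_0(B^\bullet)\circ h = \id_{B^\bullet}$ and $s_1(B^\bullet)\circ h = 0$, naturality of each $s_i$ yields the square $f \circ s_i(B^\bullet) = s_i(A^\bullet) \circ S(f)$, and hence
$$
f \;=\; f \circ s_0(B^\bullet) \circ h \;=\; s_0(A^\bullet) \circ S(f) \circ h \;=\; s_1(A^\bullet) \circ S(f) \circ h \;=\; f \circ s_1(B^\bullet) \circ h \;=\; 0.
$$
There is no serious technical obstacle: everything is a formal manipulation of natural transformations once the identity $s_0(A^\bullet) = s_1(A^\bullet)$ has been extracted from $\affl$-invariance. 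The only non-computational point is the thickness reduction in the opening paragraph, which is where one uses that $\caffDbaP$ is defined as a thick, rather than merely generated, subcategory.
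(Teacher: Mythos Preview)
Your proof is correct and follows essentially the same approach as the paper's: reduce to genuinely $\affl$-contractable $B^\bullet$, extract $s_0(A^\bullet)=s_1(A^\bullet)$ from $\affl$-invariance of $A^\bullet$, and run the naturality chain. Your write-up is in fact a bit more careful than the paper's, spelling out the thickness reduction and the reason $s_0(A^\bullet)=s_1(A^\bullet)$ via $s_i\circ p=\id$.
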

\begin{proof}
It is enough to consider the case of $B^\bullet$ being $\affl$-contractable.
Since $A^\bullet$ is $\affl$-invariant object, 
 we have $s_0(A^\bullet)= s_1(A^\bullet)\colon S(A^\bullet)\to A^\bullet.$
On other side since
  $B^\bullet$ is $\affl$-contractable object,
it follows that there is a morphism
  $$
  h \in Hom_{\DbaPreTrs}(B^\bullet, \mathcal S(B^\bullet)\colon\quad
  s_0(B^\bullet)\circ h = id,\; s_1(B^\bullet)\circ h = 0
  .$$

Hence
  for any $f\in Hom_{\DbaPreTrs}(B^\bullet,A^\bullet)$
  $$
  f =
  f\circ s_1(B^\bullet)\circ h =
  s_0(A^\bullet)\circ S(f)\circ h =
  s_1(A^\bullet)\circ S(f)\circ h = 
  f\circ s_0(B^\bullet)\circ h
  = 0
  .$$
$$\xymatrix{
S(B^\bullet)
  \ar@<-1ex>[d]_{s_1}\ar@<1ex>[d]^{s_0} \ar[r]^{S(f)}&
S(A^\bullet)
  \ar@<-1ex>[d]_{s_1}\ar@<1ex>[d]^{s_0} \\
B^\bullet
  \ar[u]|h \ar[r]^f &  
A^\bullet  
}$$  
\end{proof}

Next let's give the following standard definition.
\begin{definition}\label{def:affsimpl}
Let's denote by $\Delta$
  the simplicial scheme 
  with 
\begin{gather*}
\Delta^n = Spec k[x_0,x_1,\dots,x_n]/(x_0+x_1+\dots x_n -1) \\
e_{n,i} \colon
  (x_0,x_1,\dots,x_n)\mapsto (x_0,\dots,x_i,0,x_{i+1},\dots, x_n)\\
d_{n,i}\colon
  (x_0,x_1,\dots,x_n)\mapsto (x_0,\dots,x_i+x_{i+1},\dots, x_n)
.
\end{gather*}

Denote by $\pt$ the complex concentrated in the term zero degree with group of chains being $\bZtr(pt)$,
and denote 
$$\pt^\bullet=[ \bZtr(pt) \xrightarrow{id}\bZtr(pt)\xrightarrow{0}\bZtr(pt)\xrightarrow{id}\dots\xrightarrow{0}\bZtr(pt)\xrightarrow{id}\bZtr(pt)\xrightarrow{0} \stackrel{\deg 0}{\bZtr(pt)}\to 0\to 0\dots ]. $$ 
The embedding $\pt\to \pt^\bullet$ is quasi-isomorphism. 

Denote the endofunctor
$$C^* = \mathcal Hom_{\DbaPreTrs}(\Delta^\bullet, -)\colon \DbaPreTr\to \iaffDbaP$$
that takes the complex with terms being presheaves $A^i(-)$ to the complex with terms $A^i(\Delta^i\times -)$.
\end{definition}
\begin{remark}
The functors $\mathcal Hom(\affl,-)$, $\mathcal Hom(\Delta^i,-)$, and $\mathcal Hom(\Delta^\bullet,-)$ above, indeed, are 
the internal Hom-functor in the categories $\PreTrs$ and  $\DbaPreTrs$;
\end{remark}

\begin{theorem}\label{affsodDPW}
Suppose $\Cor$ as any ringoid over a base $S$%
; 
then
\begin{itemize}[leftmargin=15pt]
\item[1)]
the subcategory $\iaffDbaP$ is the full subcategory 
spanned by complexes with homotopy invariant cohomologies, 
it is the thick subcategory generated by homotopy invariant presheaves;
\item[2)]
the subcategory $\caffDbaP$ is 
the full subcategory of  $\DbaPs$ 
spanned by complexes quasi-isomorphic to 
    complexes of $\affl$-contractable presheaves; 
and it is is the thick subcategory generated 
by presheaves $\bZ_{tr}(\affl\times X)/\bZ_{tr}(X)$ for all $X\in Sm_k$.;
\item[3)]
there is
the semi-orthogonal decomposition 
  $$\DbaPs = \langle\iaffDbaP,\caffDbaP\rangle;$$
the left adjoint functor to the embedding $\iaffDbaP\to \DbaPs$ 
is defined by the functor $C^*$ (def. \ref{def:affsimpl}). 
\end{itemize}
\end{theorem}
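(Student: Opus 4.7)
The plan is to exhibit, for every $A^\bullet\in\DbaPs$, a functorial distinguished triangle
\begin{equation*}
K_A^\bullet \to A^\bullet \to C^*(A^\bullet) \to K_A^\bullet[1]
\end{equation*}
in which $C^*(A^\bullet)\in\iaffDbaP$ and $K_A^\bullet$ lies in the thick subcategory generated by the presheaves $\bZ_{tr}(\affl\times Y)/\bZ_{tr}(Y)$, $Y\in Sm_k$. Combined with the Hom-vanishing of lemma \ref{lm:contrinvort}, this single construction will settle simultaneously the semi-orthogonal decomposition of (3), the identification of $C^*$ as the left adjoint to $\iaffDbaP\hookrightarrow\DbaPs$, and both alternative descriptions of $\caffDbaP$ claimed in (2).

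For part (1) I would first observe that the functor $F\mapsto F(-\times\affl)$ is exact on $\PreTrs$, so its extension $S=\mathcal Hom(\affl,-)$ to $\DbaPs$ commutes with formation of cohomology presheaves. Consequently $p(A^\bullet)$ is an isomorphism in $\DbaPs$ if and only if each $h^i(A^\bullet)$ is a homotopy invariant presheaf with transfers. The coincidence with the thick subcategory generated by homotopy invariant presheaves (regarded as complexes in degree zero) then follows by induction on cohomological amplitude using the truncation triangles $\tau_{\leq n-1}A^\bullet\to\tau_{\leq n}A^\bullet\to h^n(A^\bullet)[-n]$; boundedness above is what makes the induction terminate.

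The technical core is the construction of the triangle. Homotopy invariance of $C^*(A^\bullet)$ follows from the standard cosimplicial homotopy on $\Delta^\bullet$ induced by the multiplication $\affl\times\affl\to\affl$, which produces an explicit chain homotopy witnessing $p(C^*(A^\bullet))$ as an isomorphism; the unit $A^\bullet\to C^*(A^\bullet)$ is induced by $\Delta^\bullet\to\pt$. To place the cone in the asserted thick subcategory, I would reduce to the representable case $A^\bullet=\bZ_{tr}(X)$ using that every object of $\DbaPs$ is quasi-isomorphic to a complex of direct sums of representables and that both $A^\bullet\mapsto A^\bullet$ and $A^\bullet\mapsto C^*(A^\bullet)$ preserve such resolutions. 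In this case $K_X^\bullet$ is, up to shift, the normalised complex of the augmented simplicial presheaf $n\mapsto\bZ_{tr}(\Delta^n\times X)/\bZ_{tr}(X)$. Using $\Delta^n\cong\affl^n$ and the zero-section split short exact sequences
\begin{equation*}
0\to\bZ_{tr}(\affl^{n-1}\times X)\to\bZ_{tr}(\affl^n\times X)\to\bZ_{tr}(\affl^n\times X)/\bZ_{tr}(\affl^{n-1}\times X)\to 0,
\end{equation*}
induction on $n$ exhibits each $\bZ_{tr}(\affl^n\times X)/\bZ_{tr}(X)$ as an iterated extension of presheaves of the form $\bZ_{tr}(\affl\times Y)/\bZ_{tr}(Y)$; assembling these extensions through the simplicial filtration of the normalised complex then places $K_X^\bullet$ in the asserted thick subcategory.

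Once the triangle is in place, part (3) is immediate from lemma \ref{lm:contrinvort}, and the left-adjoint property of $C^*$ is then formal. For part (2), any $\affl$-contractable $B$ satisfies $C^*(B)=0$ (apply the argument of lemma \ref{lm:contrinvort} with the homotopy invariant object $C^*(B)$ as target), so $\caffDbaP\subseteq\Ker(C^*)$; conversely, if $C^*(K)=0$ then the defining triangle forces $K\simeq K_K^\bullet$, which lies in the thick subcategory generated by $\bZ_{tr}(\affl\times Y)/\bZ_{tr}(Y)$, giving the generation description. The remaining description via complexes of $\affl$-contractable presheaves follows from the observation that each $\bZ_{tr}(\affl\times Y)/\bZ_{tr}(Y)$ is itself an $\affl$-contractable presheaf and that cones and shifts of chain maps between complexes of $\affl$-contractable presheaves remain termwise $\affl$-contractable, so both classes have the same thick closure. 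The main obstacle is the dévissage of $K_X^\bullet$: the normalised Suslin complex is unbounded below, and the identification with an iterated extension of $\bZ_{tr}(\affl\times Y)/\bZ_{tr}(Y)$'s cannot be done termwise but requires a careful filtration argument keeping all successive cones inside $\DbaPs$.
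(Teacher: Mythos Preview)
Your overall architecture coincides with the paper's: both build the triangle $K_A\to A^\bullet\to C^*(A^\bullet)$, invoke lemma~\ref{lm:contrinvort} for the orthogonality, and read off the semi-orthogonal decomposition and the left-adjoint property of $C^*$. The genuine divergence is in how the cone is placed inside $\caffDbaP$ and how the generation by $\bZ_{tr}(\affl\times X)/\bZ_{tr}(X)$ is established.

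The paper avoids your reduction to representables and your filtration d\'evissage entirely. It observes that for \emph{any} $C^\bullet$ the cone is the totalisation of $\mathcal Hom_{\PreTrs}(\bZ_{tr}(\Delta^i)/\bZ_{tr}(\pt),C^\bullet)$, and that each such term is already an $\affl$-contractable presheaf because $\bZ_{tr}(\Delta^n)/\bZ_{tr}(\pt)$ is contractable via the linear homotopy $(\lambda,x_0,\dots,x_n)\mapsto(x_0+\lambda\sum x_i,(1-\lambda)x_1,\dots,(1-\lambda)x_n)$ collapsing the simplex to a vertex. This single observation simultaneously gives the ``complex of contractable presheaves'' description of $\caffDbaP$ and the fact that the cone lies in it, without ever splitting $\bZ_{tr}(\aff^n\times X)/\bZ_{tr}(X)$ into pieces. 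For the finer generation statement by the specific presheaves $\bZ_{tr}(\affl\times X)/\bZ_{tr}(X)$, the paper does \emph{not} use your iterated-extension argument on $\bZ_{tr}(\aff^n\times X)/\bZ_{tr}(X)$; instead it builds, for an arbitrary contractable presheaf $F$, an explicit bar-type resolution $P_\bullet\to F$ with $P_i=\bigoplus_{U,s}\bZ_{tr}(\affl\times U)/\bZ_{tr}(U)$ indexed over sections $s\in F_i(\affl\times U)$ with $j_0^*(s)=0$, using acyclicity of $C^*(F)$ in degree zero to force each stage to be surjective. Your route via the filtration of $\bZ_{tr}(\aff^n\times X)/\bZ_{tr}(X)$ is correct and arguably more elementary, but it forces the reduction to representables and leaves you with the unbounded-below assembly that you flag as the main obstacle; the paper's linear-contraction argument sidesteps that obstacle completely and delivers the contractable-terms description for free.
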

\begin{proof}
Denote $\mathcal C=\DbaPs$, $\mathcal A=\iaffDbaP$, $\mathcal B=\caffDbaP$.

1)
Since the functor $\mathcal Hom_{\PreTrs}(\affl,-)$ is exact,
it follows that 
$$h^i(\mathcal Hom_{\mathcal C}(\affl,C^\bullet))  = h^i(\mathcal Hom_{\PreTr}(\affl,C^\bullet)) = h^i(C^\bullet).$$
Whence
homotopy invariant objects of $\mathcal C$ are
complexes with homotopy invariant cohomologies.
The last statement in the point 1 follows form that any complex in $\DbaPs$ is contained in the category generated by its terms via direct sums and cones.

2a) Any complex $B^\bullet\in \mathcal C$ with terms contractable presheaves is an object of the subcategory
$\mathcal B$, 
since $\affl$-contractable presheaves 
are $\affl$-contractable objects in $\mathcal C$, 
and
any complex is 
generated by its terms via (infinite) direct sums and cones.

3a)
Lemma \ref{lm:contrinvort} implies that 
$Hom_{\mathcal C}(B^\bullet,A^\bullet)=0$ for all $B^\bullet\in\mathcal B$ and $A^\bullet\in\mathcal A$.
To get the semi-orthogonal decomposition
it is enough to show that 
for any $C^\bullet\in \mathcal C$ there is a distinguished triangle 
$B^\bullet \to C^\bullet \to A^\bullet$ with $B^\bullet \in \mathcal B$, $A^\bullet \in \mathcal A$.

Consider the endo-functor 
$C^* = \mathcal Hom_{\mathcal C}(\Delta^\bullet, -)$ on $\DbaPreTrs$.          
%
Let 
$\varepsilon^\prime \colon C^*\to \id_{\mathcal C}$ 
denote the composition 
of natural transformations 
$
C^*\xleftarrow{\varepsilon} 
\mathcal Hom_{\mathcal C}(\pt^\bullet, -)\simeq 
\id_{\mathcal C}
$
induced by morphisms of simplicial objects $\Delta^\bullet\to pt^\bullet\xrightarrow{\sim} pt$.
Then for any complex $C^\bullet$ there is the distinguished triangle
\begin{equation}\label{eq:DecTriang} \Cone(\varepsilon)[1]\to
     C^\bullet \xrightarrow{\varepsilon^\prime} 
       C^*(C^\bullet) \to 
          \Cone(\varepsilon). \end{equation}       
The standard simplicial splitting of the cylinders $\Delta^i\times\affl$
defines the $\affl$-homotopy between zero and unit sections
  $s_0,s_1\colon C^*(C^\bullet)\to \mathcal Hom(\affl,C^*(C^\bullet))$, 
and hence $C^*(C^\bullet)\in \mathcal A$.
%
On other side $\Cone(\varepsilon)\in \mathcal B$, since it is isomorphic to 
$$Tot(
  \cdots\to
  \mathcal Hom_{\PreTrs}(
     \bZ_{tr}(\Delta^i)/ \bZ_{tr}(pt),
     C^\bullet
   )
  \to\cdots\to
  0 
),$$
and presheaves $ \bZ_{tr}(\Delta^n)/ \bZ_{tr}(pt)$ are $\affl$-contractable,
because of linear homotopy of affine  simplexes 
$
\affl\times\Delta^n\rightarrow \Delta^n\colon 
(\lambda,x_0,x_1,\dots,x_n)\mapsto
(x_0+\lambda\dot \sum_i x_i, 
(1-\lambda)\dot x_1,\dots,(1-\lambda)\dot x_n
)
.$

3b)
Since the third term in the triangle \eqref{eq:DecTriang}
is $C^*(C^\bullet)$, it follows that $C^*$ defined the left adjoint to the embedding.

2b)
It follows form 
triangle \eqref{eq:DecTriang}
that for any $B^\bullet\in \mathcal B$
the morphism $\Cone(\varepsilon(B^\bullet)[-1]\to B^\bullet$
is quasi-isomorphism, and as was mentioned above 
$\Cone(\varepsilon(B^\bullet)$ is a complex of contractable presheaves.
Hence $B^\bullet$ is quasi-isomorphic to a complex with contractable terms.

2c)
%
Let $\mathcal B^\prime$ be the thick subcategory in $\mathcal C$ generated by presheaves
 $\bZ_{tr}(\affl\times X)/\bZ_{tr}(X)$. 
Applying the standard construction 
we get a right adjoint functor to the embedding functor $\mathcal B^\prime\subset \mathcal C$.
Namely, 
consider the natural sequence for any presheaf $ F\in \PreTrs$
\begin{align*}
\xymatrix{
\cdots\ar[r]&
\bP_i  \ar[r]
  \ar[d]^{\varepsilon^i} &
\bP_{i-1}  \ar[r]
  \ar[d]^{\varepsilon^{i-1}} &
\cdots\ar[r] &
\bP_1  \ar[r]
  \ar[d]^{\varepsilon^1} &
\bP_0  \ar[r]
  \ar[d]^{\varepsilon} &
0    
\\
\cdots&
\preF_i\ar@{^(->}[ru] &
\preF_{i-1}\ar@{^(->}[ru] &
\cdots&
\preF_1\ar[ru] &
\preF\ar[ur]
}
,\end{align*}
where \begin{gather*}
\bP_i = 
  \bigoplus\limits_{
    U\in Sm_k, s\in \preF_i(\affl\times U)\colon j^U_0(s)=0
    }
  \bZ_{tr}(\affl\times U)/\bZ_{tr}(U),
\\
{\varepsilon^i}_{(U,s)}\colon
  \bZ_{tr}(\affl\times U)/\bZ_{tr}(U)\simeq \coker_{\PreTrs}(j_0)\xrightarrow{s}\preF_i 
\end{gather*}
$j_0^U\colon 0\times U\hookrightarrow\affl\times U$,
$j_1^U\colon 1\times U\hookrightarrow\affl\times U$,
$\preF_i=\ker(\varepsilon^i)$,
$\preF_0 = \preF$.

If $\preF\in \mathcal B$, then $C^*(\preF)$ is acyclic,
and in particular, $ h^0(C^*(\preF)) = 0$. 
Hence $h^0(C^*(\preF)) = \coker(\varepsilon)$,
and $\varepsilon$ is surjective.
Since 
  $\bZtr(\affl\times X)/\bZtr(X)\in \Dcontr$
  and
  $\varepsilon$ is surjective,
$\preF_1 = \Cone(\varepsilon)[1]\in \mathcal B$.  
Then by induction we get that
for all integer $i$ the homomorphisms $\varepsilon_i$ are surjective
and $\preF_i\in\caffDbaP$.
Thus first row of the diagram above gives resolvent of $\preF$. 

So
any contractable $\preF$ is naturally quasi-isomorphic to a complex in $\mathcal B^\prime$.
Hence via totalization we get the same for any complex $B^\bullet\in \mathcal B$.
%
%
\end{proof}
  
\begin{theorem}\label{th:affsodDSNtr}
Suppose a ringoid 
$\Cor$ satisfies the strictly homotopy invariance axiom and condition \eqref{eq:CorvNisLoc};
then
there is
a semi-orthogonal decomposition 
  $$\DbaSs = \langle\iaffDbaSh,\caffDbaSh\rangle$$
such that
\begin{itemize}[leftmargin=15pt]
\item[1)]
the subcategory $\iaffDbaSh$ is the full subcategory 
spanned by complexes with homotopy invariant sheaf cohomologies, 
and it is the thick subcategory generated by homotopy invariant sheaves;
\item[2)]
the subcategory $\caffDbaSh$ is 
the full subcategory of  $\DbaSs$ 
spanned by complexes Nisnevich quasi-isomorphic to 
    complexes of $\affl$-contractable presheaves,
and 
it is 
the thick subcategory generated 
by sheaves $\bZtrNis(\affl\times X)/\bZtrNis(X)$ for all $X\in Sm_k$; 
\item[3)]
the functor $C^*$ (def. \ref{def:affsimpl}) is exact in respect to the Nisnevich localisation and induces
the left adjoint functor to the embedding $\iaffDbaSh\to \DbaSs$. 
\end{itemize}
%
    
\end{theorem}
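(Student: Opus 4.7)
The plan is to descend the semi-orthogonal decomposition of Theorem~\ref{affsodDPW} through the equivalence $\DbaSs \simeq \DbaNisTr$ provided by Lemma~\ref{lm:DnisDSh}. The axiom (SHI) is precisely what makes this descent work, by ensuring that $C^*$ interacts compatibly with Nisnevich sheafification.

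The crucial technical step is to show that $C^* = \mathcal Hom(\Delta^\bullet,-)$ on $\DbaPs$ preserves Nisnevich quasi-isomorphisms, and hence descends to an endofunctor $C^*_{nis}$ on $\DbaSs$. I would argue as follows: for any $F^\bullet$, the complex $C^*(F^\bullet)$ lies in $\iaffDbaP$ by Theorem~\ref{affsodDPW}, so its cohomology presheaves are homotopy invariant. Then (SHI) says that the Nisnevich cohomology presheaves of a homotopy invariant presheaf with transfers remain homotopy invariant, so a bicomplex spectral sequence applied to $C^*(N^\bullet)(X) = N^\bullet(X\times\Delta^\bullet)$ forces the sheaf cohomologies of $C^*(N^\bullet)$ to vanish whenever those of $N^\bullet$ do. This gives the exactness claim of item 3 and simultaneously the needed endofunctor $C^*_{nis}$ on $\DbaSs$.

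With $C^*_{nis}$ in hand, I would define $\iaffDbaSh$ and $\caffDbaSh$ as the essential images of $\iaffDbaP$ and $\caffDbaP$ under Nisnevich localization. The characterization of $\iaffDbaSh$ in item 1 as the subcategory of complexes with homotopy invariant sheaf cohomologies uses (SHI) to move homotopy invariance between the presheaf and sheaf levels. The two characterizations of $\caffDbaSh$ in item 2 (Nisnevich quasi-isomorphism to complexes of $\affl$-contractible presheaves, and generation by $\bZtrNis(\affl\times X)/\bZtrNis(X)$) follow by Nisnevich sheafification of the corresponding statements in Theorem~\ref{affsodDPW}. Left orthogonality $\Hom_{\DbaSs}(\caffDbaSh, \iaffDbaSh) = 0$ is proved by the zero/one section argument of Lemma~\ref{lm:contrinvort}, which transports verbatim to the sheaf derived category. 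The decomposition triangle for $C \in \DbaSs$ is obtained by lifting to $\widetilde C \in \DbaPs$, applying \eqref{eq:DecTriang}, and projecting via $l_{nis}$: since $C^*$ descends, the middle term gives $C^*_{nis}(C) \in \iaffDbaSh$ and the third term sits in $\caffDbaSh$. The adjointness in item 3 then follows from the adjointness in Theorem~\ref{affsodDPW}(3) through the universal property of Nisnevich localization.

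The main obstacle is the first step. Without (SHI), the mixing of simplicial and cohomological degrees in $C^*(N^\bullet)$ can manufacture nontrivial sheaf cohomology out of Nisnevich-acyclic input, and there is no formal reason for the spectral sequence to degenerate; (SHI) is exactly the input needed to force the homotopy invariant cohomologies produced by $C^*$ to remain Nisnevich-acyclic. Every other component of the theorem — the two characterizations of the subcategories, the semi-orthogonality, and the adjointness — is a formal consequence of this descent of $C^*$ combined with the already established decomposition on $\DbaPs$.
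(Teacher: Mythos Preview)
Your overall plan matches the paper's: push the two subcategories of Theorem~\ref{affsodDPW} forward along Nisnevich localisation and verify the decomposition survives. But the step you single out as ``crucial'' --- a direct spectral-sequence proof that $C^*$ preserves Nisnevich quasi-isomorphisms --- has a genuine gap.

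The problem is that (SHI) asserts homotopy invariance, not vanishing. Concretely, the endofunctor $F\mapsto F(-\times\Delta^n)$ does \emph{not} preserve Nisnevich acyclicity (for henselian local $U$ the scheme $U\times\Delta^n$ is not henselian local), so neither filtration of the bicomplex $N^\bullet(-\times\Delta^\bullet)$ produces an $E_1$- or $E_2$-page whose terms are Nisnevich-trivial. Your sentence ``(SHI) says \ldots\ so a bicomplex spectral sequence \ldots\ forces the sheaf cohomologies of $C^*(N^\bullet)$ to vanish'' has no logical connective between its two halves. A correct direct argument needs an extra input --- for instance Theorem~\ref{th:HGWCorinj}, to reduce vanishing of a homotopy invariant presheaf to its values at fields --- which you do not invoke.

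The paper sidesteps this entirely by reversing the logical order. It establishes orthogonality in $\DbaSs$ first, not by transporting Lemma~\ref{lm:contrinvort} but via the computation of Theorem~\ref{th:ExtTr=Hnis}: one has $Hom_{\DbaSs}(\bZ_{tr,nis}(X),F[i]) = H^i_{nis}(X,F)$, and then (SHI) in its full strength gives $H^i_{nis}(X\times\affl,F_{nis}) = H^i_{nis}(X,F_{nis})$, hence the required vanishing on the generators $\bZ_{tr,nis}(\affl\times X)/\bZ_{tr,nis}(X)$. Orthogonality together with the image of triangle~\eqref{eq:DecTriang} under $l_{nis}$ then yields the semi-orthogonal decomposition, and the Nisnevich exactness of $C^*$ falls out \emph{a posteriori} as the uniqueness of the reflector. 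In fact your own later ingredients --- orthogonality and the lifted decomposition triangle --- would already give the decomposition without any prior knowledge that $C^*$ descends; if you drop the spectral-sequence step and lead with those, your argument works.
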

\begin{proof}

Let $\caffDbaSh=\mathcal B\subset \DbaSs$ and $\iaffDbaSh=\mathcal A\subset \DbaSs$ be the images of $\caffDbaP$ and $\iaffDbaP$ under the Nisnevich localisation. 
The theorem \ref{affsodDPW} and strictly homotopy invariance axiom implies that 
$$Hom_{\DbaSs}(\bZtrNis(\affl\times X)/\bZtrNis(X), F_{nis}[i])=0, i\in \mathbb Z,$$ for $X\in Sm_k$, and homotopy invariant $F\in \PreTrs,$
and consequently we have $$Hom_{\DbaSs}(B^\bullet, A^\bullet)=0, B^\bullet\in \mathcal B, A^\bullet \in \mathcal A.$$
This yields the semi-orthogonal decomposition $\DbaSs=\langle\mathcal B,\mathcal A \rangle$
and that $C^*$ defies the left adjoint to the embedding $\mathcal A\to \DbaSs$.

%

Now, since  the localisation functor $\DbaPs\to \DbaSs$
takes a presheaf $F\in \PreTr$ to the sheafification $F_{nis}$ (see corollary \ref{cor:NisCohTr}), 
since by the strictly homotopy invariance axiom the sheafification of a homotopy invariant presheaf is homotopy invariant, and since the sheafification of $\bZ_{tr}(X\times\affl)/\bZ_{tr}(X)$ is $\bZtrNis(X\times\affl)/\bZtrNis(X)$, the points 1 and 2 follows. 
%
%
%
\end{proof}

Finally, 
since
$GWCor_k$ and $WCor_k$ over an infinite prefect field $k$
satisfy strictly homotopy invariance axiom
by theorem 
\ref{th:StrictHGWCor},
we can deduce the  
the main result of the article. 
Indeed this is a reformulation of the previous theorem, 
and so the following theorem is true for any $\Cor$ satisfying strictly homotopy invariance axiom and condition \eqref{eq:CorvNisLoc}.

\begin{theorem}\label{th:DMeff}
Suppose the base filed $k$ is prefect and $\chark k\neq 2$; then
\begin{itemize}[leftmargin =12pt]
\item[1)] 
the \emph{category of effective GW-motives} $\DMeffmGW$ over $k$
is equivalent to the full subcategory in the bounded above derived category $\DbaShNGW$ of the category of Nisnevich sheaves with GW-transfers spanned by \emph{motivic complexes}, i.e. complexes $A^\bullet\in \DbaShNGW$ with homotopy invariant sheaf cohomologies $\underline h^i(A^\bullet)$.
\item[2)]
Under the identification from the previous point for any $X\in Sm_k$ GW-motive $M^{GW}_{eff}(X)$ is naturally isomorphic to the complex 
\begin{multline*}
M^{GW}_{eff}(X) = \mathcal Hom 
(\Delta^\bullet, \bZtrNis(X) )=\\ 
[\cdots \to {GWCor}_{nis}(-\times\Delta^i,X)\to 
\cdots \to {GWCor}_{nis}(-,X)]
.\end{multline*}
\item[3)]
there is natural isomorphism
\begin{equation*}\label{eq:}
Hom_{\DMeffmGW}(M^{GW}_{eff}(X),F[i]) \simeq H^i_{nis}(X,F)
\end{equation*}
for all $i\geq 0$,
$X\in Sm_k$, and 
homotopy invariant sheaf with GW-transfers $F$.
\end{itemize}

And similarly for $Witt$-motives.

\end{theorem}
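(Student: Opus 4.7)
The plan is to deduce the theorem as a direct corollary of Theorem \ref{th:affsodDSNtr} applied to $\Cor = GWCor_k$ (respectively $WCor_k$); so the main work is verifying the hypotheses and then translating the three claims through the resulting semi-orthogonal decomposition.

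First I would check the two axioms required by Theorem \ref{th:affsodDSNtr}: the condition \eqref{eq:CorvNisLoc} for GW- and Witt-correspondences is Proposition \ref{prop:GWWittNisloc}, and the strictly homotopy invariance axiom (SHI) is supplied by Theorem \ref{th:StrictHGWCor}, whose hypotheses are exactly that $k$ is infinite perfect with $\chark k \neq 2$.

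Next I would unwind Definition \ref{def:DMeff}. By construction $\DMeffmGW$ is the localisation of $\DbaPreGW$ at two classes of morphisms: the maps $\mathrm{Tot}(\bZ_{tr}(\mathcal N)) \to 0$ for all Nisnevich squares $\mathcal N$, and the projections $X \times \affl \to X$ for $X \in Sm_k$. The thick closure of the first class is the subcategory of Nisnevich-acyclic complexes by Corollary \ref{cor:GWNisKerGen}, so inverting it yields $\DbaShNGW$ via Lemma \ref{lm:DnisDSh}. The second class has, after sheafification, cones isomorphic to $\bZtrNis(X \times \affl)/\bZtrNis(X)$ (using that the projection is split by the zero section), and by Theorem \ref{th:affsodDSNtr}(2) these generate precisely the thick subcategory $\caffDbaSh$. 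Hence $\DMeffmGW$ is the Verdier quotient $\DbaShNGW/\caffDbaSh$, and the semi-orthogonal decomposition of Theorem \ref{th:affsodDSNtr}(3) identifies this quotient with $\iaffDbaSh$, with quotient functor realised by $C^*$. By Theorem \ref{th:affsodDSNtr}(1) the subcategory $\iaffDbaSh$ is exactly the full subcategory of motivic complexes, which proves part (1).

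For part (2), under the equivalence above the motive $M^{GW}_{eff}(X) = l_{\affl,nis}(\bZ_{tr}(X))$ is carried to $C^*(\bZtrNis(X))$, and unpacking Definition \ref{def:affsimpl} of $C^*$ yields exactly the simplicial complex displayed in the statement. For part (3), any homotopy invariant sheaf with GW-transfers $F$ satisfies $F[i] \in \iaffDbaSh$, so the adjunction between $C^*$ and the inclusion $\iaffDbaSh \hookrightarrow \DbaShNGW$ gives
\begin{equation*}
\Hom_{\DMeffmGW}(M^{GW}_{eff}(X), F[i]) \simeq \Hom_{\DbaShNGW}(\bZtrNis(X), F[i]),
\end{equation*}
and the right-hand side equals $H^i_{nis}(X, F)$ by Theorem \ref{th:ExtTr=Hnis}.

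The principal obstacle, and the one justifying the long preparatory sections, is matching the explicit two-step localisation of Definition \ref{def:DMeff} with the semi-orthogonal projection onto $\iaffDbaSh$. Concretely, one must identify the thick subcategory of $\DbaShNGW$ generated by the sheafified $\affl$-projection cones with the subcategory $\caffDbaSh$ obtained from $\caffDbaP$ by Nisnevich localisation; this identification is exactly the content of Theorem \ref{th:affsodDSNtr}(2), and is where SHI enters in an essential way. Once this match is made, parts (2) and (3) are pure adjunction calculus.
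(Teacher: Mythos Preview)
Your proposal is correct and follows essentially the same route as the paper: verify (NL) and (SHI) for $GWCor_k$ via Proposition~\ref{prop:GWWittNisloc} and Theorem~\ref{th:StrictHGWCor}, invoke Theorem~\ref{th:affsodDSNtr} to obtain the semi-orthogonal decomposition with reflector $C^*$, and deduce part (3) from the resulting adjunction together with Theorem~\ref{th:ExtTr=Hnis}. The paper's own proof is terser, having already recorded the identification $\DMeffmCorvl \simeq \DbaShNTrs[\{X\times\affl\to X\}^{-1}]$ at the start of the section (via Lemma~\ref{lm:DnisDSh}), but your more explicit unwinding through Corollary~\ref{cor:GWNisKerGen} and Theorem~\ref{th:affsodDSNtr}(2) is exactly what underlies that identification.
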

\begin{proof}
1, 2)
It follows from theorem \ref{th:affsodDSNtr} that $\DMeffmGW\simeq \Daff({\ShNGW})$ and there is 
a reflection $\DbaShNGW\colon l_{\aff} \dashv i_{\aff}\colon \DMeffmGW$, 
where 
the reflector $l_{\aff}$ is equivalent to the localisation functor in respect to morphisms of the form $X\times\affl\to X$, 
right adjoint $i_{\aff}$ is equivalent to the embedding of the full subcategory spanned by motivic complexes,
and 
the composition $i_{\aff}(l_{\aff}(-))$ is equal to the functor $C^*=\mathcal Hom_{\DbaShNGW}(\Delta^\bullet, - )$.

3)
The adjunction isomorphism of the pair $_{\aff} \dashv i_{\aff}$
with the isomorphism from theorem \ref{th:ExtTr=Hnis} yields the claim.
%
\end{proof}

\begin{remark}
Theorems \ref{affsodDPW} and \ref{th:affsodDSNtr} or \ref{th:DMeff} give the second square in the diagram \ref{diag:Constr}.
\end{remark}

\end{document}